\documentclass[oneside,a4paper,11pt,notitlepage]{article}
\usepackage[left=0.8in, right=0.8in, top=1.5in, bottom=1.5in]{geometry}
\usepackage{pifont}
\usepackage{makecell}
\usepackage[T1]{fontenc} 
\usepackage[utf8]{inputenc} 
\usepackage[english]{babel}
\usepackage{lipsum} 
\usepackage{lmodern}
\usepackage{amssymb}
\usepackage{amsthm}
\usepackage{bm}
\usepackage{mathtools}
\usepackage{braket}
\usepackage{esint}
\newcommand{\abs}[1]{{\left|#1\right|}}
\newcommand{\norma}[1]{{\left\Vert#1\right\Vert}}

\usepackage{tabularx}
\usepackage{fontawesome}
\usepackage{booktabs}
\usepackage{graphicx}
\usepackage{tikz}
\usetikzlibrary{patterns}
\usepackage{multicol}
\usepackage{caption}
\usepackage{enumerate}
\usepackage{multicol}
\usepackage[skins,theorems]{tcolorbox}
\tcbset{highlight math style={enhanced,
		colframe=black,colback=white,arc=0pt,boxrule=1pt}}
\def\XXint#1#2#3{{\setbox0=\hbox{$#1{#2#3}{\int}$}
    \vcenter{\hbox{$#2#3$}}\kern-.5\wd0}}

\theoremstyle{definition}
\newtheorem{definizione}{Definition}[section]
\theoremstyle{plain}
\newtheorem{teorema}{Theorem}[section]

\newtheorem{lemma}[teorema]{Lemma}
\newtheorem{prop}[teorema]{Proposition}
\newtheorem{corollario}[teorema]{Corollary}
\theoremstyle{definition}
\newtheorem{esempio}{Example}[section]
\newtheorem{oss}[esempio]{Remark}

\DeclareMathOperator{\R}{\mathbb{R}}

\DeclareMathOperator{\diam}{\, \textup{diam}}

\makeatletter
\newcommand{\myfootnote}[2]{\begingroup
	\def\@makefnmark{}%
	\addtocounter{footnote}{-1}%
	\footnote{\textbf{#1} #2}
	\endgroup}
\makeatother

\usepackage{soul}

\definecolor{OliveGreen}{rgb}{0,0.6,0}

\usepackage{hyperref}
\hypersetup{linktoc=none, bookmarksnumbered, colorlinks=true, linkcolor=red}
 \title{Geometrical bounds for the torsion and the first eigenvalue of the Laplacian with Robin boundary condition}
\author{Rosa Barbato, Alba Lia Masiello, Rossano Sannipoli}
\date{}

\newcommand{\Addresses}{{
  \bigskip 
   \textit{E-mail address}, R.~Barbato: \texttt{rosa.barbato2@unina.it} 
   
 \medskip
  \textit{E-mail address}, A.L.~Masiello: \texttt{albalia.masiello@unina.it} 
   
 \medskip
  \textsc{Dipartimento di Matematica e Applicazioni ``R. Caccioppoli'', Universit\`a degli studi di Napoli Federico II, Via Cintia, Complesso Universitario Monte S. Angelo, 80126 Napoli, Italy.}
  \medskip
 
  \textit{E-mail address}, R.~Sannipoli: \texttt{rossano.sannipoli@fjfi.cvut.cz} 
  
     \medskip 
\textsc{Department of Mathematics, Faculty of Nuclear Sciences and Physical Engineering, Czech Technical University in Prague, Trojanova 13, 120 00, Prague, Czech Republic.}\par\nopagebreak 

}} 
\makeatletter
\def\Cline#1#2{\@Cline#1#2\@nil}
\def\@Cline#1-#2#3\@nil{%
  \omit
  \@multicnt#1%
  \advance\@multispan\m@ne
  \ifnum\@multicnt=\@ne\@firstofone{&\omit}\fi
  \@multicnt#2%
  \advance\@multicnt-#1%
  \advance\@multispan\@ne
  \leaders\hrule\@height#3\hfill
  \cr}
\makeatother

\definecolor{verde}{RGB}{20,150,100}
\definecolor{purple}{RGB}{200,30,200}

\begin{document}
\maketitle
\begin{abstract}
In this paper, we deal with functionals involving the torsion and the first eigenvalue of the Laplacian with Robin boundary conditions (to which we refer as Robin Torsion and Robin Eigenvalue), with other geometrical quantities, in the class of convex sets. Firstly, we prove an upper bound for the Robin Torsion in terms of the $L^1$ and $L^2$ norms of the distance function from the boundary, which allows us to prove a generalization of the  Makai inequality involving the Robin Torsion, the Lebeasgue measure, and the inradius of a convex set. Subsequently, we prove quantitative estimates for the Robin Makai functional and for the Robin P\'olya functionals, which link the Lebesgue measure and the perimeter with the Robin Torsion and the Robin Eigenvalue respectively. In particular, we prove that the optimal values of all these shape functionals are achieved by slab domains.
\\ \\
\textsc{MSC 2020:}   35P15, 49Q10, 35J05, 35J25.\\
\textsc{Keywords:} P\'olya estimates, Makai estimates, quantitative inequalities, Robin boundary conditions.
\end{abstract}
\begin{center}
\begin{minipage}{11cm}
\small
\tableofcontents
\end{minipage}
\end{center}
\section{Introduction}
Let $\Omega\subset\mathbb R^n$ be an open, bounded set with Lipschitz boundary. Let $\beta>0$ be a positive parameter and let us consider the following problems
\begin{equation}\label{eq:RobinTorsionEigenvalue}
\textbf{(TP)}\;\begin{cases}
\displaystyle-\Delta u = 1 & \text{in}\;\Omega\\
\displaystyle\frac{\partial u}{\partial \nu}+\beta u = 0 & \text{on}\;\partial\Omega, 
\end{cases}\qquad \qquad \qquad \textbf{(EP)}\;\begin{cases}
\displaystyle-\Delta u = \lambda u& \text{in}\;\Omega\\
\displaystyle\frac{\partial u}{\partial \nu}+\beta u = 0 & \text{on}\;\partial\Omega, 
\end{cases}
\end{equation}
which are known in literature as the Torsion problem and the Eigenvalue problem with Robin boundary conditions, respectively. In particular, the $L^1$-norm of the unique solution to problem \textbf{(TP)} is called Robin Torsion of the set $\Omega$, which has also the following variational characterization:
\begin{equation}\label{torsion:robin}
    T_\beta(\Omega) = \sup_{v\in H^1(\Omega)}\frac{\displaystyle\bigg(\int_\Omega v\,dx\bigg)^2}{\displaystyle \int_\Omega\abs{\nabla v}^2\,dx+\beta\int_{\partial\Omega}v^2\,d\mathcal{H}^{n-1}}.
\end{equation}
The spectrum of the Laplacian for the problem \textbf{(EP)} is discrete, and composed by a sequence of positive and positively diverging eigenvalues. The first eigenvalue of this sequence is called first Robin eigenvalue and has the following variational characterization
\begin{equation}\label{eq:varcareig}
    \lambda_\beta(\Omega)= \inf_{v\in H^1(\Omega)}\frac{\displaystyle \int_\Omega\abs{\nabla v}^2\,dx+\beta\int_{\partial\Omega}v^2\,d\mathcal{H}^{n-1}}{\displaystyle\int_\Omega v^2\,dx}.
\end{equation}
In the present paper, we establish qualitative and quantitative relations in the class of open, bounded convex sets  for shape functionals that involve $T_\beta(\Omega)$ and $\lambda_\beta(\Omega)$ with  the Lebesgue measure, the perimeter and the inradius of  $\Omega$. The interest in such topic dates back to \cite{makai, polya1960}, where geometric inequalities for the Dirichlet Torsion and the Dirichlet eigenvalue are studied. We start by listing some results that will help clarify the framework and the quantities involved.
\subsection*{Some state of art on the Dirichlet case} Let us focus, for the moment, in the case of Dirichlet boundary conditions, which corresponds  to $\beta=+\infty$. Let us denote by $T(\Omega)$ and $\lambda(\Omega)$ the Dirichlet Torsion and the first Dirichlet Eigenvalue, respectively. First of all, let us stress that $T(\Omega)$ is monotonically increasing and $\lambda(\Omega)$ is monotonically decreasing with respect to the set inclusion,  and they satisfy the following scaling properties: 
\begin{equation*}
     T(t\Omega)= t^{n+2}T(\Omega),\qquad \lambda(t\Omega)= t^{-2}\lambda(\Omega), \qquad \qquad \forall t>0.
\end{equation*}
These two quantities, seen as functionals of $\Omega$, have attracted the attention of many mathematicians in the last century due to their physical and engineering applications. For instance, in the Dirichlet case and for $n=2$, the Torsion of $\Omega$ represents the torsional rigidity of a three-dimensional bar with constant cross-section $\Omega$, which is the resistance of the bar to be bended. The first eigenvalue, instead, can be interpreted as the fundamental frequency of a bi-dimensional vibrating membrane whose shape is $\Omega$ and which is fixed to the boundary. Finding the best shapes that optimize these functionals under some geometrical constraints is the goal of shape optimization problems. Two classical inequalities identify the ball as the optimal set under a volume constraint, in the class of open and bounded sets. Let $\Omega \subset \mathbb{R}^n$ be an open set with finite Lebesgue measure, and let $B$ denote a ball. The first result is the well-known Saint-Venant inequality, originally conjectured in \cite{stvenant}, which can be written in the scale-invariant form
\begin{equation*}
    \abs{\Omega}^{-\frac{n+2}{n}}T(\Omega) \le \abs{B}^{-\frac{n+2}{n}}T(B),
\end{equation*}
where $|\Omega|$ stands for the Lebesgue measure of $\Omega$. The second is the Faber–Krahn inequality, stating that
\begin{equation*}
    \abs{\Omega}^\frac{2}{n}\lambda(\Omega) \ge \abs{B}^\frac{2}{n}\lambda(B). 
\end{equation*}
Moreover, since the latter half of the twentieth century, several further inequalities connecting $T(\Omega)$ and $\lambda(\Omega)$ have been explored (see, for example, \cite{KJ2,KJ1,PS}). \\
Bounds of the torsion and the first eigenvalue have been investigated in terms of other geometrical quantities, in the class of convex sets. The pioneers in this direction have been  P\'olya, Makai and Hersch. Denoting  by $P(\Omega)$ and $r(\Omega)$ the perimeter and  the inradius of $\Omega$,  respectively (see Section \ref{sec2:preliminaries} for the precise definitions), it holds that

\begin{equation}\label{eq:dirichletfunctionals}
\begin{aligned}
\textbf{(I)}\;\;\frac{1}{3} \leq  \frac{T(\Omega) P^2(\Omega)}{\abs{\Omega}^3} \leq \frac{2}{3}\qquad\qquad\qquad & \textbf{(II)}\;\;\frac{\pi^2}{4n^2}\le\frac{\lambda( \Omega) \abs{\Omega}^2}{P^2(\Omega)}\le \frac{\pi^2}{4} \\
\\[-0.4em]
\textbf{(III)}\;\;\frac{1}{n(n+2)} \le \frac{T(\Omega)}{r(\Omega)^2\abs{\Omega}} \leq \frac{1}{3} \qquad\qquad\qquad & \textbf{(IV)}\;\;\frac{\pi^2}{4}\le\lambda( \Omega)r(\Omega)^2\le \lambda_1(B_1).
\end{aligned}
\end{equation}
We here briefly discuss the literature behind these bounds. The bounds in \textbf{(I)} were proved, in the planar setting, by Makai and  P\'olya in \cite{makai, polya1960}. Moreover they proved that these estimates cannot be improved. In particular, the lower bound is asymptotically achieved by a sequence of thinning rectangles, while the upper one by a sequence of thinning triangles. The left-hand inequality in \textbf{(I)} was later extended to any dimension in \cite{gavitone_2014}, where it was established for all open, bounded, convex subsets of $\mathbb{R}^n$, again with optimality achieved by a family of thinning cylinders. Additional generalizations are presented in \cite{AMPS,BBP2021}. Regarding \textbf{(II)}, 
the upper estimate was first established in \cite{polya1960} in the planar case, with sharpness attained by sequences of thinning rectangles. This result was subsequently extended in \cite{gavitone_2014}  to higher dimensions and, more generally, to the first eigenvalue of the anisotropic $p$-Laplacian. The lower bound was obtained by Makai in dimension two \cite{makai}, and later generalized to all dimensions in \cite{brasco} and to the anisotropic case in \cite{DBN}. Turning to \textbf{(III)},
Makai proved the upper estimate in two dimensions (see \cite{makai}), also showing that it is sharp, with extremal behavior exhibited by a sequences of thinning rectangles. The lower estimate originates in \cite{PS}, where equality holds only for disks. Later, both bounds were extended to arbitrary dimensions and to more general operators in \cite{dellapietra_gavitone2018}, where it is further shown that the upper bound is achieved by a suitable family of thinning cylinders. Finally, concerning the functional in \textbf{(IV)}, the right-hand inequality follows immediately from monotonicity under set inclusion. The left-hand inequality is the classical Hersch–Protter inequality: it was first proved by Hersch in two dimensions \cite{hersch1960frequence} and subsequently extended to all dimensions by Protter \cite{protter1981lower}. Additional developments can be found in \cite{brasco_2020_principal_frequencies,brasco2018principal, prinari2023sharp, MS}.
\\

\noindent 
Only recently, the lower bound in \textbf{(I)} and \textbf{(IV)}, and the upper bound in \textbf{(II)} and \textbf{(III)}, have been improved, by proving continuity inequalities and quantitative versions in the papers \cite{AGS, AMPS}. Here, the authors managed to add geometrical remainder terms which allowed to better understand the nature of the optimal sequences. 

\subsection*{Back to the Robin case}
One of our aim is to extend this analysis to the Robin case, so let now be $\beta \in (0,+\infty)$. The presence of Robin boundary conditions drastically changes  the problems studied. The first big difference with the Dirichlet case is that there is no monotonicity with respect to the set inclusion. Moreover, the scaling properties do not hold in a standard way; indeed, it holds 
\begin{equation*}
    T_{\beta/t}(t\Omega)=t^{n+2}T_\beta(\Omega), \qquad \qquad \lambda_{\beta/t}(t\Omega)=t^{-2}\lambda_\beta(\Omega), \qquad \qquad \forall t>0.
\end{equation*}
Regarding the shape optimization issue, it is known that the Robin counterpart of the Saint-Venant and Faber-Krahn inequalities holds. Concerning the Robin Torsion, the authors in \cite{BG2015}, proved in the class of bounded Lipschitz sets in $\mathbb R^n$, that
\begin{equation*}
    T_\beta(\Omega)\le T_\beta(\Omega^\sharp)
\end{equation*}
where $\Omega^\sharp$ is the ball centered at the origin having the same measure as $\Omega$. Although the result is analogous to the Dirichlet case, the techniques used to prove it are completely different. This is due to the fact that the solution to \textbf{(TP)} is not constant in general on $\partial \Omega$ and therefore all rearrangements arguments or Talenti inequalities fail.\\
About the first Robin eigenvalue, the corresponding Faber-Krahn inequality has been proved by Bossel \cite{Bos1986} in dimension $2$ and subsequently by Daners \cite{Dan2006} in higher dimensions. Nowadays, this inequality is known as the Bossel-Daners inequality and guarantees that in the class of bounded Lipschitz sets in $\mathbb{R}^n$ we have
\begin{equation*}
    \lambda_\beta(\Omega)\ge \lambda_\beta(\Omega^\sharp),
\end{equation*}
where again $\Omega^\sharp$ is the ball centered at the origin having the same measure as $\Omega$. Surprisingly, a rearrangement technique in this case is useful to prove the above inequality.\\

As already mentioned, one of the aims of this paper is to obtain estimates for $\lambda_\beta(\Omega)$ and $T_\beta(\Omega)$ in terms of geometric quantities related to $\Omega$, in some case improving in a quantitative way the results obtained in \cite{DP2024, DPP, Savo, sperb}. \\ 
For a smooth, bounded, and  mean convex domain $\Omega$, in \cite{DPP} the authors prove a lower bound for the first Robin eigenvalue in terms of the first eigenvalue of a one-dimensional problem, and, as a consequence, a lower estimate in terms of the inradius $r(\Omega)$ of the set, 

\begin{equation}
\lambda_{\beta}(\Omega)\ge 
\left( \frac{\pi}{2}\right)^2\frac{1}{\left(r(\Omega)+\frac{\pi}{2}\beta^{-1}\right)^2}.
\end{equation}
This inequality, for $\beta\to +\infty$, reduces to the Hersh-Protter inequality, which is the lower bound in \textbf{(IV)}. In \cite{DP2024}, the author studies the problem of bounding the Robin Torsion and the Robin Eigenvalue in terms of perimeter and measure of the set $\Omega$. The proof of these results rely on a dimensional reduction argument and on  the study of the one-dimensional problem for the Robin eigenvalue and the Robin Torsion. The one-dimensional eigenvalue problem taken into account is the following 
\begin{equation}
\label{1dim_intro}
    \begin{cases}
    X''+\nu X=0 \quad\text{in}\ (0,s_0),\\
    X'(0)=0, \\
    X'(s_{0})+\beta X(s_{0})=0. 
    \end{cases}
\end{equation} 
If we denote by $\nu_1(\beta,s_0)$ the first eigenvalue of \eqref{1dim_intro} and we choose $s_0=\abs{\Omega}/P(\Omega)$, then the author proves that among open and bounded convex sets in $\R^n$ it holds

\begin{equation}
    \label{dp:eigen}
    \lambda_\beta(\Omega)\le \nu_1\left(\beta, \frac{\abs{\Omega}}{P(\Omega)}\right),
\end{equation}
being this inequality asymptotically sharp on slabs, which are the cartesian product between an interval and $\mathbb R^{n-1}$ (see Section \ref{sec2:preliminaries} for the precise definition). In this case, the dependence on the geometry of the upper bound is not explicit as in $\textbf{(II)}$, but it is possible, by exploiting the property of $\nu_1(\beta, \cdot)$ (see subsection \ref{1D_subsec}),  to prove an explicit bound

\begin{equation}\label{explicit}
    \lambda_\beta(\Omega)\le \frac{\pi^2}{4}\left(\frac{1}{1+\frac{2}{ r(\Omega)\beta}}\right) \frac{P^2(\Omega)}{\abs{\Omega}^2}.
\end{equation}
The price to pay in order to have an explicit geometrical upper bound is the loss of the sharpness of the estimate. Nevertheless,
the latter inequality is the generalization to the Robin setting of the upper bound in \textbf{(II)}, and it reduces to it when $\beta$ diverges.\\ 
Lastly, for the Robin Torsion, a P\'olya-type lower bound was proved in \cite{DP2024}

\begin{equation}
    \label{lower:torsion:robin}
    \dfrac{T_\beta(\Omega)P^2(\Omega)}{|\Omega|^3}\ge\bigg(\dfrac{1}{3}+\frac{1}{r(\Omega)\beta}\bigg),
\end{equation}
and, once again, one can see that for $\beta$ large we recover the lower bound in \textbf{(I)}, but in this case, the minimizing sequences were not characterized.\\

Our first aim is to obtain bounds for the Robin Torsion in terms of the measure and the inradius of the set, obtaining a Robin counterpart of $\textbf{(III)}$. In order to do that, we need to bound the Robin Torsion in terms of the $L^1$ and $L^2$-norms of the distance function from the boundary of $\Omega$ (see Section \ref{sec2:preliminaries} for the definition and properties). In the case of the Dirichlet Torsion, it was proved by Makai (see \cite{makai}) in two dimensions and in \cite{prinari2023sharp} in higher dimensions and in a more general setting. The result is the following 
\begin{teorema}\label{thm:TlessD}
    Let $\Omega$ be a bounded, open, nonempty and convex set of $\mathbb{R}^n$. Then
    \begin{equation}\label{eq:torsiondistancefunction}
        T_\beta(\Omega)\le \int_{\Omega}d(x,\partial \Omega)^2\,dx+\frac{2}{\beta}\int_{\Omega}d(x,\partial \Omega)\,dx.
    \end{equation}
\end{teorema}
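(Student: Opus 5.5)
Let $u$ be the solution of \textbf{(TP)} and $d(x)=d(x,\partial\Omega)$. Since $T_\beta(\Omega)=\int_\Omega u\,dx$, it is enough to prove a pointwise bound on $u$. The bound we aim for is not $u\le d^2$, which already fails for slabs, but the dual formulation below. The approach is a calibration (dual) argument: for every vector field $V$ on $\Omega$ with $-\operatorname{div} V\ge 1$ in $\Omega$ (in the weak sense), and for every $v\in H^1(\Omega)$ with $v\ge0$, we have
\[
\int_\Omega v\,dx\le -\int_\Omega v\operatorname{div}V\,dx=\int_\Omega \nabla v\cdot V\,dx-\int_{\partial\Omega} v\,V\cdot\nu\,d\mathcal H^{n-1}.
\]
We then apply Young's inequality in both terms: $\nabla v\cdot V\le \frac14|\nabla v|^2\cdot 4/4\ldots$, or more precisely $\int \nabla v\cdot V\le \frac{\varepsilon}{2}\int|\nabla v|^2+\frac1{2\varepsilon}\int|V|^2$, and similarly on the boundary. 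Taking $v=u\ge 0$ in \eqref{torsion:robin}, or rather working with the quotient directly, gives the standard dual bound
\[
T_\beta(\Omega)\le \int_\Omega |V|^2\,dx+\frac1\beta\int_{\partial\Omega}(V\cdot\nu)^2\,d\mathcal H^{n-1}.
\]
This follows from the Cauchy--Schwarz inequality $(\int v)^2\le (\int|\nabla v|^2+\beta\int_{\partial\Omega} v^2)(\int|V|^2+\beta^{-1}\int_{\partial\Omega}(V\cdot\nu)^2)$, and the sign restriction $v\ge0$ can be removed by replacing $v$ with $|v|$.

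The natural choice of field is $V=d\,\nabla d$, or a slight modification of it, because of the following facts about convex sets.
\begin{itemize}
\item $d$ is concave, hence $\Delta d\le 0$ as a measure.
\item $|\nabla d|=1$ a.e.
\item Consequently $-\operatorname{div}(d\nabla d)=-|\nabla d|^2-d\Delta d\ge 1$.
\end{itemize}
With this choice $V=0$ on $\partial\Omega$, so the boundary term vanishes and we recover the Dirichlet bound $\int d^2$ of \cite{prinari2023sharp}. The term $\frac2\beta\int d$ in \eqref{eq:torsiondistancefunction} must therefore come from a shifted field. I would take
\[
V=(d+c)\nabla d,\qquad c=\tfrac1\beta .
\]
Then $-\operatorname{div}V=1-(d+c)\Delta d\ge1$, still using concavity of $d$ and $d+c\ge0$. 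On $\partial\Omega$ we have $V\cdot\nu=-c$ a.e., since $\nabla d=-\nu$ there. The dual bound then gives
\[
T_\beta\le\int_\Omega (d+c)^2\,dx+\frac{c^2}{\beta}P(\Omega)=\int_\Omega d^2\,dx+\frac2\beta\int_\Omega d\,dx+\frac{|\Omega|}{\beta^2}+\frac{P(\Omega)}{\beta^3}.
\]
This has extra terms, so the plain Cauchy--Schwarz step is lossy and has to be refined.

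The refinement is to avoid splitting into a product. For $v\ge0$ we write
\[
\int_\Omega v\le \int_\Omega\nabla v\cdot V+c\int_{\partial\Omega}v,
\]
and set $F(v)=2\int v-\int|\nabla v|^2-\beta\int_{\partial\Omega}v^2$, whose supremum over $v$ equals $T_\beta$. With $V=d\nabla d$ (so $c=0$ in the boundary term) we get $2\int\nabla v\cdot V\le\int|\nabla v|^2+\int d^2$. This yields $T_\beta\le\int d^2$ plus a boundary contribution, and that cannot be right either, because $u$ does not vanish on $\partial\Omega$.

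So, with $V=(d+c)\nabla d$, we use the sharper estimate
\[
2\int_\Omega\nabla v\cdot V-\int_\Omega|\nabla v|^2\le\int_\Omega(d+c)^2
\]
and, on the boundary, $2c\int v-\beta\int v^2\le c^2P/\beta$. Both of these are still lossy. The precise choice therefore needs care, and I expect this to be the \textbf{main obstacle}: matching the constant $\frac2\beta$ exactly without leftover positive terms. The plan is to handle the term $(d+c)^2=d^2+2cd+c^2$ by absorbing $c^2|\Omega|$ against the boundary gain. I would try integrating by parts $\int_\Omega c\,\Delta(\cdot)$, or using the coarea identity on the level sets of $d$ together with $P(\{d>t\})\le P(\Omega)$. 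If that fails, I would reduce to the one-dimensional problem along the inner parallel sets $\{d>t\}$, as in \cite{DP2024}. There the slab computation shows equality, since $u=\tfrac{s^2-x^2}{2}+\tfrac{s}{\beta}$ gives exactly $\int d^2+\frac2\beta\int d$ in the limit.

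Finally, all steps are justified by approximating $\Omega$ by smooth convex sets, together with the weak (measure) sense of $\Delta d\le0$ on convex domains.
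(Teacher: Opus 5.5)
Your duality framework is sound. For $-\operatorname{div}V\ge 1$ one indeed has $T_\beta(\Omega)\le\int_\Omega|V|^2+\beta^{-1}\int_{\partial\Omega}(V\cdot\nu)^2$, with equality for $V=\nabla u$. The gap is that the field you feed into it violates the constraint, because of a sign error. Since $\operatorname{div}(d\nabla d)=|\nabla d|^2+d\Delta d$, you have $-\operatorname{div}(d\nabla d)=-1-d\Delta d$. Concavity of $d$ therefore only yields $-\operatorname{div}(d\nabla d)\ge-1$, and the same holds for $(d+c)\nabla d$. Flipping the orientation does not rescue it: $-\operatorname{div}(-d\nabla d)=1+d\Delta d$, and now the negative singular part of $\Delta d$ on the ridge works against you (on $(0,2s)$ one gets $1-2s\,\delta_s$). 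So neither the claimed recovery of the Dirichlet bound $\int_\Omega d^2$ nor your bound with extra terms $\abs{\Omega}/\beta^2+P(\Omega)/\beta^3$ is established. Your diagnosis that Cauchy--Schwarz is ``lossy'' is also mistaken: the loss comes entirely from the choice of $V$. Beyond this, the proposal explicitly leaves the decisive step open. The fallback of reducing along the inner parallel sets $\{d>t\}$ cannot give an upper bound, because test functions of $d$ only bound the supremum $T_\beta$ from below; that is exactly how the paper obtains \eqref{eq:lowboundpolya}.

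The missing idea is to decompose along the normal segments (gradient lines of $d$) rather than along level sets. Your own slab computation shows the right field: there $u'=-x$, i.e.\ $\nabla u=(s-d)\nabla d$, which is ``fiber length minus $d$'' times $\nabla d$. For a convex polytope, split $\Omega=\bigcup_i E_i$ with $E_i=\{(x',y):x'\in F_i,\ 0<y<f_i(x')\}$, the points whose nearest boundary point lies on the facet $F_i$, written in coordinates with $F_i\subset\{y=0\}$. Set $V=(f_i(x')-y)e_n$ on $E_i$. Then $-\operatorname{div}V=1$ in each $E_i$. $V$ vanishes on the interfaces $\{y=f_i(x')\}$, so its divergence has no singular part, and $V\cdot\nu=-f_i$ on $F_i$. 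Since $d(x',y)=y$ on $E_i$, your dual bound gives, with no loss,
\[
T_\beta(\Omega)\le\sum_i\int_{F_i}\Big(\int_0^{f_i(x')}(f_i(x')-y)^2\,dy+\frac{f_i(x')^2}{\beta}\Big)dx'=\sum_i\int_{F_i}\Big(\frac{f_i^3}{3}+\frac{f_i^2}{\beta}\Big)dx'=\int_\Omega d^2\,dx+\frac2\beta\int_\Omega d\,dx.
\]
This is the dual form of the paper's proof. The paper uses the same decomposition but stays on the primal side: it discards the tangential gradient, applies Cauchy--Schwarz twice, and bounds each fiber quotient by the one-dimensional Robin--Neumann torsion $s^3/3+s^2/\beta$ of $[0,s]$. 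Finally, approximate by polytopes, as the paper does, not by smooth convex sets. In curved normal coordinates the Jacobian $J(t)=\prod_k(1-t\kappa_k)$ is nonincreasing, so $\int_0^f(f-t)^2J\,dt\ge\int_0^f t^2J\,dt$, and this simple field overshoots the target.
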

We highlight that to prove Theorem \ref{thm:TlessD} we use the demonstration technique implemented in \cite{makai}. 
As already pointed out before, this estimate allows us to prove a Makai-type inequality in the Robin case and it is contained in the next corollary.

\begin{corollario}
    \label{thm:geometricmakai}
Let $\Omega$ be a bounded, open, nonempty and convex set of $\mathbb{R}^n$. Then
\begin{equation}\label{eq:qualitativemakai}
    \frac{T_\beta(\Omega)}{r(\Omega)^2\abs{\Omega}}\le \frac{1}{3}+\frac{1}{r(\Omega)\beta}.
\end{equation}
\end{corollario}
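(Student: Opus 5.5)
The plan is to start from Theorem \ref{thm:TlessD} and bound the two integrals of the distance function in terms of $r(\Omega)$ and $|\Omega|$. Writing $d(x)=d(x,\partial\Omega)$ and $r=r(\Omega)$, we have $d\le r$ in $\Omega$. Define $\mu(t)=|\{d>t\}|$ for $t\in[0,r]$. The layer cake formula then gives
\begin{equation*}
\int_\Omega d\,dx=\int_0^r \mu(t)\,dt,\qquad \int_\Omega d^2\,dx=\int_0^r 2t\,\mu(t)\,dt .
\end{equation*}
It therefore suffices to prove $\mu(t)\le |\Omega|\,(1-t/r)$ for every $t\in[0,r]$. Indeed, this gives $\int_\Omega d\le |\Omega| r/2$ and $\int_\Omega d^2\le |\Omega| r^2/3$. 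Plugging into \eqref{eq:torsiondistancefunction} yields $T_\beta(\Omega)\le |\Omega| r^2/3+|\Omega| r/\beta$, and dividing by $r^2|\Omega|$ gives \eqref{eq:qualitativemakai}.

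The key step is the linear decay estimate for $\mu$. I would prove it through the perimeters of the inner parallel sets $\Omega_t=\{d>t\}$. These sets are convex, and $\mu'(t)=-P(\Omega_t)$ for a.e. $t$, by the coarea formula together with $|\nabla d|=1$ a.e. Next I would use the classical inequality for convex sets
\begin{equation*}
|\Omega_t|\le r(\Omega_t)\,P(\Omega_t),\qquad r(\Omega_t)=r-t .
\end{equation*}
It comes from decomposing $\Omega_t$ into cones over the boundary with apex at an incenter: each cone has height at most the inradius, so $|\Omega_t|\le \frac{r-t}{n}P(\Omega_t)$, which is even stronger than needed. I would keep the weaker constant-one version, since that is what the linear bound requires. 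This gives the differential inequality $\mu'(t)\le -\mu(t)/(r-t)$, that is, $\frac{d}{dt}\big[\mu(t)/(r-t)\big]\le 0$. So $\mu(t)/(r-t)$ is nonincreasing, and $\mu(t)\le \mu(0)(r-t)/r=|\Omega|(1-t/r)$.

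A more geometric alternative avoids the ODE. Let $x_0$ be an incenter and $B_r(x_0)\subset\Omega$. By convexity, the homothety $y\mapsto x_0+(1-t/r)(y-x_0)$ maps $\Omega$ into $\Omega_t$, but this gives a lower bound on $\mu$ rather than an upper one, so the perimeter route seems to be the right one. I also considered a shortcut via the concavity of $t\mapsto\mu(t)^{1/n}$ (Brunn--Minkowski for inner parallel sets). This would give $\mu(t)\le|\Omega|(1-t/r)^{n}$ only after a further argument, so it is not needed here.

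I expect the main obstacle to be making the inequality $|\Omega_t|\le (r-t)P(\Omega_t)$ rigorous. One has to check that $r(\Omega_t)=r-t$ exactly, and that the absolute continuity of $\mu$ is justified so that the monotonicity of $\mu(t)/(r-t)$ follows. Both hold because $d$ is concave on convex $\Omega$, so each $\Omega_t$ is convex and $\mu$ is locally Lipschitz on $[0,r)$. Note finally that equality in \eqref{eq:qualitativemakai} is only approached by thin slab-like sets.
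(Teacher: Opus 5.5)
The reduction is fine: from Theorem~\ref{thm:TlessD} and the layer-cake formula, the bound $\mu(t)\le|\Omega|(1-t/r)$ gives $\int_\Omega d\le r|\Omega|/2$ and $\int_\Omega d^2\le r^2|\Omega|/3$, and hence the claim. The gap is in how you justify the key inequality $|\Omega_t|\le (r-t)P(\Omega_t)$: the cone decomposition runs in the opposite direction. With apex at an incenter $x_0$ of $\Omega_t$, one has $|\Omega_t|=\frac1n\int_{\partial\Omega_t}(x-x_0)\cdot\nu\,d\mathcal{H}^{n-1}$. Since $B_{r-t}(x_0)\subset\Omega_t$, every supporting hyperplane lies at distance \emph{at least} $r-t$ from $x_0$. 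So the cones have height at least the inradius, and what you actually get is $|\Omega_t|\ge\frac{r-t}{n}P(\Omega_t)$, i.e.\ the upper bound in \eqref{convex_estimates} applied to $\Omega_t$. Your ``stronger'' inequality $|\Omega_t|\le\frac{r-t}{n}P(\Omega_t)$ is false. For the rectangle $(0,2a)\times(0,2)$ with $a>1$ one has $|\Omega|=4a>2a+2=\frac{r}{2}P(\Omega)$. Fed into your differential inequality, it would give $\mu(t)\le|\Omega|(1-t/r)^n$, which fails for thin sets. Your aside is backwards for the same reason: concavity of $\mu^{1/n}$, like the homothety, yields the lower bound $\mu(t)\ge|\Omega|(1-t/r)^n$, not an upper one.

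The constant-one inequality you need is true, but it rests on a different fact. The inner parallel sets are nested convex sets, so $t\mapsto P(t)$ is nonincreasing (as recalled in Section~\ref{sec2:preliminaries}). Hence $\mu(t)=\int_t^r P(s)\,ds\le (r-t)P(t)$, which is exactly the inequality the paper uses in the proof of Theorem~\ref{thm:quantitativemakai} to get $\int_\Omega d\le r|\Omega|/2$. More directly still, $\mu'=-P$ is nondecreasing, so $\mu$ is convex on $[0,r]$ and lies below its chord: $\mu(t)\le|\Omega|(1-t/r)$, with no ODE and no need for $r(\Omega_t)=r-t$.

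With this replacement your proof is correct, and it is a genuinely different route from the paper's. The paper reads the corollary off the polytope decomposition in the proof of Theorem~\ref{thm:TlessD}. It uses the pointwise bound $f_i(x')\le r(\Omega)$ inside $\int_{F_i}\big(f_i^3/3+f_i^2/\beta\big)\,dx'$, together with $\int_{F_i}f_i\,dx'=|E_i|$. Your version uses only the statement of Theorem~\ref{thm:TlessD}, so it works for general convex sets without returning to polytopes. The price is an elementary analysis of $\mu$.
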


We stress that we did not mention anything about the sharpness of inequalities \eqref{lower:torsion:robin} and \eqref{eq:qualitativemakai}, and the reason is that we need one inequality to prove the sharpness of the other.\\
Hence, our second aim is to study the equality case of inequalities \eqref{lower:torsion:robin} and \eqref{eq:qualitativemakai} and, in order to do that, we actually focus on the stability issues.


All the P\'olya-type inequalities are asymptotically sharp along a sequence of slabs. The lack of an optimal set in these inequalities might appear to be an obstacle when studying the corresponding stability problem. To overcome this issue, in \cite{ABF,ABF2, AGS, AMPS} two new remainder terms were introduced. 
The one we will focus on is denoted by $\mathcal{R}(\Omega)$, and it is a quantity between $0$ and $n$, defined as
$$\mathcal{R}(\Omega):=\frac{P(\Omega)r(\Omega)}{\abs{\Omega}}-1.$$
The infimum $0$ is achieved, for instance, by sequences of thinning cylinders or slabs (see Section $2$, Proposition \ref{prop:PRM-1}) and, for this reason, the remainder term $\mathcal{R}(\Omega)$ will play a central role in the next quantitative results. The first one we introduce is a quantitative result about the Makai Robin inequality, and it is the following

\begin{teorema}\label{thm:quantitativemakai}
    Let $\Omega\subset \mathbb R^n$ be an open, bounded, nonempty and convex. Then it holds
    \begin{equation}\label{eq:MakaiRobin}
    2\left(\frac{1}{3}+\frac{1}{r(\Omega)\beta}\right)\mathcal{R}(\Omega)\ge\left(\dfrac{1}{3}+\dfrac{1}{\beta r(\Omega)}\right)-
        \dfrac{T_{\beta}(\Omega)}{\abs{\Omega}r(\Omega)^2}\ge C_1\mathcal{R}(\Omega),
    \end{equation} with $C_1=\dfrac{n+1}{3n(2n-1)}$. As a consequence, sequence of slab-like domain are sharp for inequality \eqref{eq:qualitativemakai}.
\end{teorema}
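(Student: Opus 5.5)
The plan is to prove the two inequalities in \eqref{eq:MakaiRobin} separately: the left one follows from the P\'olya-type bound \eqref{lower:torsion:robin}, and the right one from Theorem~\ref{thm:TlessD} combined with an analysis of the inner parallel sets of $\Omega$.

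\emph{Left inequality.} By \eqref{lower:torsion:robin},
$$\frac{T_\beta(\Omega)}{|\Omega| r^2}\ge \Big(\frac13+\frac1{r\beta}\Big)\frac{|\Omega|^2}{P^2r^2}=\Big(\frac13+\frac1{r\beta}\Big)\frac{1}{(1+\mathcal R(\Omega))^2},$$
since $|\Omega|/(P(\Omega)r(\Omega))=(1+\mathcal R)^{-1}$. The elementary inequality $(1+x)^{-2}\ge 1-2x$ for $x\ge0$ then gives
$$\Big(\frac13+\frac1{\beta r}\Big)-\frac{T_\beta}{|\Omega|r^2}\le 2\Big(\frac13+\frac1{r\beta}\Big)\mathcal R .$$
This also yields the sharpness claim: along thinning slab-like sequences $\mathcal R\to0$ by Proposition~\ref{prop:PRM-1}, so the deficit in \eqref{eq:qualitativemakai} tends to $0$.

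\emph{Right inequality.} Start from Theorem~\ref{thm:TlessD}. Write $\Omega_s=\{d>s\}$ and $f(s)=P(\Omega_s)$ for $s\in(0,r)$. By the coarea formula ($|\nabla d|=1$),
$$|\Omega|=\int_0^r f,\qquad \int_\Omega d=\int_0^r sf(s)\,ds,\qquad \int_\Omega d^2=\int_0^r s^2 f(s)\,ds .$$
By convexity, $f$ is nonincreasing with $f(0)=P$. Moreover, the Brunn--Minkowski inequality together with $\Omega_s\supseteq (1-s/r)(\Omega-x_0)+x_0$, where $x_0$ is an incenter, gives the lower bound $f(s)\ge h(s):=P(1-s/r)^{n-1}$. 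Then
$$\Big(\frac13+\frac1{\beta r}\Big)-\frac{T_\beta}{|\Omega|r^2}\ \ge\ \frac{1}{|\Omega| r^2}\int_0^r\Big(\frac{r^2}{3}-s^2\Big)f\,ds+\frac{2}{\beta|\Omega|r^2}\int_0^r\Big(\frac r2-s\Big)f\,ds .$$
Both kernels have zero mean on $(0,r)$ and are decreasing. By Chebyshev's inequality applied to the nonincreasing $f$, the $\beta$-term is nonnegative and can be dropped. It remains to show
$$\int_0^r\Big(\frac{r^2}{3}-s^2\Big)f\,ds\ \ge\ C_1 r^2|\Omega|\,\mathcal R .$$

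\emph{Main obstacle.} The remaining step is a one-dimensional extremal problem. Among nonincreasing $f$ with $h\le f\le P$ and $\int_0^r f=|\Omega|=Pr/(1+\mathcal R)$, minimize $\int_0^r(r^2/3-s^2)f$. A bathtub/rearrangement argument, using that the kernel is decreasing, should show that the minimizer is $f=P$ on $[0,a]$ and $f=h$ on $(a,r)$, with $a$ fixed by the mass constraint. One then computes both sides explicitly as functions of $a$, or equivalently of $\mathcal R\in[0,n-1]$. The last task is to check the resulting inequality with constant $C_1=\frac{n+1}{3n(2n-1)}$, possibly via a convexity or comparison argument in the single variable. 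I expect this explicit computation, and in particular pinning down exactly this constant, to be the delicate part. If the exact minimizer is unwieldy, I would first try a cruder linearization near $\mathcal R=0$ combined with a bound at the endpoint $\mathcal R=n-1$.
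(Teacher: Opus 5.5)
\textbf{Verdict: there is a genuine gap, in the main step of the right inequality.} Your left inequality (via \eqref{lower:torsion:robin} and $(1+\mathcal R)^{-2}\ge 1-2\mathcal R$) is correct and matches the paper. So is your handling of the $\beta$-term: your Chebyshev argument is equivalent to the paper's bound $\int_\Omega d\le r|\Omega|/2$, obtained from $\mu(t)\le (r-t)P(t)$.

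\emph{The proposed extremal is the wrong one.} The configuration $f=P$ on $[0,a]$, $f=h$ on $(a,r)$ puts all the excess of $f$ over $h$ where the decreasing kernel $k(s)=\frac{r^2}{3}-s^2$ is largest. It is therefore the \emph{maximizer} of $\int_0^r kf$ in your class, not the minimizer.

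\emph{The reduced problem cannot give a linear bound.} Your class also contains $f^*=\max\{h,c\}$, that is, $f^*=h$ on $[0,b]$ and $f^*\equiv h(b)$ on $[b,r]$; this is in fact the minimizer. Normalize $r=P(\Omega)=1$. Then
\[
\int_0^1\Big(\tfrac13-s^2\Big)f^*\,ds=\int_0^b\Big(\tfrac13-s^2\Big)\big(h(s)-h(b)\big)\,ds=\frac{(n-1)b^2}{6}+O(b^3),
\]
while $|\Omega|=\int_0^1 f^*=1-(n-1)b+O(b^2)$, so $\mathcal R=(n-1)b+O(b^2)$. The left-hand side is thus about $\mathcal R^2/(6(n-1))$, far below $C_1r^2|\Omega|\mathcal R$ as $b\to 0$. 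Monotonicity together with $h\le f\le P$ only yields a quadratic deficit, so no computation inside your one-dimensional problem can reach the bound with $C_1$.

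\emph{What is missing is the full concavity of $t\mapsto P(t)^{1/(n-1)}$.} This is exactly what excludes $f^*$: $(f^*)^{1/(n-1)}$ is linear and then flat, hence convex. The paper uses concavity in differential form. For $s\ge t$ one has $P(s)\le P(t)+\frac{s-t}{n-1}P'(t)$, which integrates to \eqref{measure_estimate1} in the form
\[
\mu(t)\le P(t)(r-t)+\frac{(r-t)^2}{2(n-1)}P'(t).
\]
Insert this into $\int_\Omega d^2=2\int_0^r t\mu(t)\,dt$ and into $\int_0^r tP=\int_0^r\mu$, and integrate by parts. This gives
\[
\int_0^r t^2P(t)\,dt\le\frac{2(n+1)}{3n}\,r\int_0^r tP(t)\,dt-\frac{r^2|\Omega|}{3n},\qquad \int_0^r tP(t)\,dt\le\frac{n}{2n-1}\,r|\Omega|-\frac{r^2P(\Omega)}{2(2n-1)}.
\]
These combine to $\int_\Omega d^2\le \frac{r^2|\Omega|}{3}-C_1r^2|\Omega|\mathcal R$, which is the needed estimate.

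For $n=2$ this is sharp: linear profiles $P(t)=P(\Omega)(1-t/r)$, for instance triangles, give equality with $C_1=\frac16$. If you want to keep the extremal-problem viewpoint, you must add concavity of $f^{1/(n-1)}$ as a constraint. The pointwise estimate above avoids having to solve that problem at all.
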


Regarding the P\'olya inequality for the  Robin Torsion, we get
\begin{teorema}\label{thm:quantitativepolyatorsion}
    Let $\Omega\subset\R^n$ be an open, bounded., nonempty and convex set. Then, it holds

    \begin{equation}
        \label{polya:lower:R}
        (n+1)\bigg(\dfrac{1}{3}+\frac{1}{r(\Omega)\beta}\bigg)\mathcal{R}(\Omega)\ge \frac{T_\beta(\Omega) P^2(\Omega)}{\abs{\Omega}^3}- \left(\frac13+ \frac{1}{\beta r(\Omega)}\right)\ge C_2 \mathcal{R}^3(\Omega),
    \end{equation}
 with $C_2=\dfrac{1}{2^3\cdot3^4  n^3}.$
     As a consequence, sequence of slab-like domain are sharp for inequality \eqref{lower:torsion:robin}.
\end{teorema}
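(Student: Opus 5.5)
We prove the two inequalities in \eqref{polya:lower:R} separately, combining Theorem \ref{thm:TlessD}, Corollary \ref{thm:geometricmakai}, Theorem \ref{thm:quantitativemakai} and the lower bound \eqref{lower:torsion:robin}. Set $r=r(\Omega)$, $\mathcal R=\mathcal R(\Omega)$ and $A=\frac13+\frac1{\beta r}$.

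\textbf{Upper bound.} By Corollary \ref{thm:geometricmakai}, $T_\beta(\Omega)\le A r^2|\Omega|$. Hence
$$\frac{T_\beta(\Omega)P^2(\Omega)}{|\Omega|^3}\le A\left(\frac{P(\Omega)r}{|\Omega|}\right)^2=A(1+\mathcal R)^2 .$$
We get $A(1+\mathcal R)^2-A=A\mathcal R(2+\mathcal R)$. Since $0\le\mathcal R\le n-1$ (this follows from $P r\le n|\Omega|$ for convex sets, Section 2), we have $2+\mathcal R\le n+1$, which gives the left inequality.

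\textbf{Lower bound.} We need a genuine improvement of \eqref{lower:torsion:robin} by a term of order $\mathcal R^3$, and we split into cases according to the size of $\mathcal R$.

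\emph{Case $\mathcal R$ small.} Here we want to exploit the gap $(1+\mathcal R)^2-1\approx 2\mathcal R$. We would like a lower bound $T_\beta\ge c\,r^2|\Omega|$ stated in terms of $r$ rather than $|\Omega|/P$, so that multiplying by $P^2/|\Omega|^2=(1+\mathcal R)^2/r^2$ produces the gain. The natural route is to redo the proof of \eqref{lower:torsion:robin} with the test function $v=f(d(x,\partial\Omega))$ in \eqref{torsion:robin}, choosing $f$ as the 1D Robin torsion profile on $(0,r)$: $f(t)=\frac{1}{2}(2rt-t^2)+\frac{r}{\beta}$. We then estimate numerator and denominator through the coarea formula and the inner parallel sets $\Omega_t$, whose perimeters satisfy $P(\Omega_t)\ge P(\Omega)(1-t/r)^{n-1}$ and $|\Omega_t|$ is controlled via Brunn--Minkowski. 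A direct comparison gives a gain linear in $\mathcal R$, but only for $\mathcal R\le\mathcal R_0$; the loss coming from the inner parallel set estimates is what degrades the power.

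\emph{Case $\mathcal R$ not small.} We use \eqref{lower:torsion:robin} with the trivial gain. Alternatively, we compare with the Saint-Venant/Robin-ball type lower bounds, i.e.\ the lower bound in \textbf{(III)} adapted to Robin. Since $\mathcal R\le n$, the constant $C_2\sim n^{-3}$ absorbs the constants, which explains the cubic power and the factor $3^4 2^3$.

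\textbf{Main obstacle.} The key difficulty is quantifying, via the test function in \eqref{torsion:robin}, how far $|\Omega_t|$ is from the slab profile $|\Omega|(1-t/r)$ in terms of $\mathcal R$. The estimate we aim for is
$$\int_0^r P(\Omega_t)(r-t)\,dt\ \ge\ \frac{|\Omega| r}{2}\,(1+c\,\mathcal R^2).$$
This is obtained from concavity of $t\mapsto|\Omega_t|^{1/n}$ together with $P(\Omega)=(1+\mathcal R)|\Omega|/r$. Choosing an optimal cutoff between the two cases then yields $\mathcal R^3$.
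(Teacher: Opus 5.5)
Your upper bound is correct and is the paper's argument: Corollary \ref{thm:geometricmakai} gives $T_\beta P^2(\Omega)/|\Omega|^3\le A(1+\mathcal R)^2$, and $\mathcal R\le n-1$ gives the factor $n+1$. This alone already gives the sharpness on slab-like domains. The lower bound, however, is not proved, and your plan does not work.

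The reduction to a bound $T_\beta\ge c\,r^2|\Omega|$ is circular. To gain after multiplying by $(1+\mathcal R)^2$ you need $c\ge A(1+\mathcal R)^{-2}\bigl(1+C_2\mathcal R^3/A\bigr)$, which is the statement itself. The bound actually available is the left inequality in \eqref{eq:MakaiRobin}, itself derived from \eqref{lower:torsion:robin}. It gives only $c=A(1-2\mathcal R)$, hence $A(1-3\mathcal R^2-2\mathcal R^3)<A$.

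More seriously, the profile $f(t)=rt-\frac{t^2}{2}+\frac{r}{\beta}$ is the wrong test function. Coarea and integration by parts give exactly
\[
\frac{T_\beta(\Omega)P^2(\Omega)}{|\Omega|^3}\ge k^2\,\frac{(b+\iota)^2}{1-2\iota+bk},\qquad k=1+\mathcal R,\quad b=\frac{1}{\beta r},\quad \iota=\frac{1}{r^2|\Omega|}\int_0^r(r-t)\mu(t)\,dt .
\]
The lower bound $\mu(t)\ge(|\Omega|-P(\Omega)t)^+$ gives $\iota\ge\frac{3k-1}{6k^2}$. The resulting right-hand side is $A+\frac{\mathcal R}{\beta r}$ to first order, and for $b=0$ it equals $\frac13-\frac{\mathcal R^2}{4(3k^2-3k+1)}<\frac13$. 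So the only linear gain is the boundary contribution $\mathcal R/(\beta r)$, which is exactly the one the paper discards via $P(\Omega)/|\Omega|>1/r$. It vanishes as $\beta r\to\infty$, so you cannot get a constant independent of $\beta$ and $r$. For $\beta r\gg 1/\mathcal R$ you do not even recover \eqref{lower:torsion:robin}.

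This is not only lossy estimation. Take flat hexagons with vertices $(\pm L,\pm h)$, $(0,\pm H)$ and let $L\to\infty$, so that $\mathcal R\to\frac{H-h}{H+h}$. One can check that the exact $\mu$ still gives $\frac13-\frac{\mathcal R^2}{4}+O(\mathcal R^3)$ at $b=0$.

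The large-$\mathcal R$ case has no argument either. \eqref{lower:torsion:robin} gives zero gain. The lower bound in \textbf{(III)} yields only $(1+\mathcal R)^2/(n(n+2))$, which lacks the $\frac{1}{\beta r}$ term and is in general below $A$.

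Finally, your ``main obstacle'' estimate points the wrong way. Since $\int_0^rP(t)(r-t)\,dt=r|\Omega|-\int_0^r\mu(t)\,dt$, a lower bound on it is an upper bound on $\mu$, which is useless for a test-function lower bound on $T_\beta$. Moreover, concavity of $\mu^{1/n}$ gives lower bounds on $\mu$, i.e.\ the opposite inequality.

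The missing idea is to use the \emph{optimal} profile among functions of the distance:
\[
g(t)=\int_0^t\frac{\mu(s)}{P(s)}\,ds+\frac{|\Omega|}{\beta P(\Omega)},
\]
which solves $-(g'P)'=P$ with $g'(0)=\beta g(0)$. It gives \eqref{eq:lowboundpolya}: $T_\beta(\Omega)\ge\int_0^r\frac{\mu^2}{P}\,dt+\frac{|\Omega|^2}{\beta P(\Omega)}$. Now split at $\bar t=|\Omega|/P(\Omega)$.
\begin{itemize}
\item On $[0,\bar t]$, \eqref{steiner-measure} and $P(t)\le P(\Omega)$ give exactly the slab value $\frac{|\Omega|^3}{3P^2(\Omega)}$.
\item On $[\bar t,r]$, the inequality $\frac{1}{P(t)}\ge\frac{P(t)}{P^2(\Omega)}=\frac{-\mu'(t)}{P^2(\Omega)}$ gives the extra term $\frac{\mu(\bar t)^3}{3P^2(\Omega)}$.
\item The boundary term gives $\frac{P(\Omega)}{\beta|\Omega|}>\frac{1}{\beta r}$.
\end{itemize}
Hence the deficit is at least $\frac13\mu(\bar t)^3/|\Omega|^3$. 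Lemma \ref{lemma:M(PM)eP(MP)}, $\mu(\bar t)\ge\frac{\mathcal R}{6n}|\Omega|$, turns this into $\frac13\bigl(\frac{\mathcal R}{6n}\bigr)^3=C_2\mathcal R^3$. The cube is just the cube of the residual volume $\mu(\bar t)$ beyond the slab profile. No case distinction or cutoff optimization is needed, and the constant is uniform in $\beta$ because the gain comes from the interior term, not the boundary term.
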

The last main Theorem is the quantitative version of the inequality \eqref{dp:eigen}.
\begin{teorema}\label{thm:eigen_R}
       Let $\Omega\subset\R^n$ be an open, bounded, nonempty and convex set. Let $s_0=\dfrac{\abs{\Omega}}{P(\Omega)}$, and let $\nu_{1}(\beta,s_0)$ be the eigenvalue defined in \eqref{def_eigenvalue_mu}. Then, it holds

    \begin{equation}\label{stima:autoval:R}
       K_1\mathcal{R}(\Omega) \geq \frac{\nu_{1}(\beta,s_0)\abs{\Omega}^2}{P^2(\Omega)} -\frac{\lambda_\beta(\Omega)\abs{\Omega}^2}{P^2(\Omega)}\ge K_2\mathcal{R}(\Omega)^4,
    \end{equation}
where
\begin{equation*}
    K_1= \frac{\pi^2}{2}\sqrt{1+\frac{4\beta^2}{\pi^2}(r(\Omega)^2+\frac{\pi^2}{4\beta}r(\Omega))}, \qquad\qquad K_2 = \dfrac{1}{2\cdot 3^4 \pi (2n-1)n^3}\left(\frac{\pi^2}{4}\frac{1}{1+\frac{\pi^2 n}{4\beta r(\Omega)}}\right)^4.
\end{equation*}
\end{teorema}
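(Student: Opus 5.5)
We need to prove the two-sided quantitative estimate for $\nu_1(\beta,s_0)-\lambda_\beta(\Omega)$ (scaled by $\abs{\Omega}^2/P^2(\Omega)$). The plan treats the two inequalities separately and uses two tools throughout: the monotonicity and derivative bounds of $s\mapsto\nu_1(\beta,s)$ from the one-dimensional analysis, and the comparison between $s_0=\abs{\Omega}/P(\Omega)$ and $r(\Omega)$. For convex sets, $r(\Omega)/n\le s_0\le r(\Omega)$, and by definition $r(\Omega)=s_0(1+\mathcal{R}(\Omega))$.

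\textbf{Upper bound.} Since $\nu_1(\beta,\cdot)$ is decreasing in the length, the Hersch-type lower bound of \cite{DPP} (obtained via the one-dimensional problem with $s_0$ replaced by $r(\Omega)$) gives $\lambda_\beta(\Omega)\ge \nu_1(\beta,r(\Omega))$. Hence
$$\frac{(\nu_1(\beta,s_0)-\lambda_\beta)\abs{\Omega}^2}{P^2}\le s_0^2\big(\nu_1(\beta,s_0)-\nu_1(\beta,r)\big)\le s_0^2\,(r-s_0)\sup_{[s_0,r]}\abs{\partial_s\nu_1}.$$
We then bound $\partial_s\nu_1$. Write $\nu_1=\mu^2/s^2$, where $\mu\tan\mu=\beta s$. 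Implicit differentiation gives an explicit expression for $\partial_s\nu_1$ in terms of $\mu\le\pi/2$, $\beta$ and $s$. Bounding $\mu$ from above and the denominator from below should produce a factor like $\frac{\pi^2}{2}\sqrt{1+\frac{4\beta^2}{\pi^2}(r^2+\frac{\pi^2}{4\beta}r)}\,\frac{1}{s_0^3}$. Multiplying by $s_0^2(r-s_0)=s_0^3\mathcal{R}$ then gives $K_1\mathcal{R}$. This step is a calculus exercise, and matching the exact constant $K_1$ is the only delicate part.

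\textbf{Lower bound.} We revisit the proof of \eqref{dp:eigen} in \cite{DP2024}. The test function there is $v(x)=X(d(x,\partial\Omega))$ with $X$ the eigenfunction of \eqref{1dim_intro} on a suitable interval, and the coarea formula and $P(\Omega_t)$ (with $\Omega_t$ the inner parallel sets) reduce the Rayleigh quotient to a one-dimensional weighted quotient with weight $P(\Omega_t)$. We keep the remainder instead of discarding it. By concavity of $t\mapsto P(\Omega_t)^{1/(n-1)}$ (Brunn–Minkowski), the function $P(\Omega_t)/P(\Omega)$ lies below $1$ and above $(1-t/r)^{n-1}$, and $\int_0^r P(\Omega_t)\,dt=\abs{\Omega}$. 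When $\mathcal{R}$ is large, the weight is far from constant on $[0,r]$ in a quantifiable way, and this deviation should yield a gap of order $\mathcal{R}$ times a power of $\nu_1$-type quantities. The lower bound $\nu_1(\beta,s)s^2\ge \frac{\pi^2}{4}\big(1+\frac{\pi^2}{4\beta s}\big)^{-1}$, combined with $s\ge r/n$, explains the factor $\big(\frac{\pi^2}{4}\frac{1}{1+\pi^2 n/(4\beta r)}\big)^4$ in $K_2$.

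A simpler alternative for the lower bound is a Pólya-type route. One could compare with the inequality obtained from the test function $X(d)$ on $(0,r)$, which already gives $\lambda_\beta\le$ a quantity strictly below $\nu_1(\beta,s_0)$ by an amount $\gtrsim \mathcal{R}^4$. The fourth power would come from chaining: the gap in the weighted quotient is roughly the square of the $L^2$ deviation, which itself is controlled by $\mathcal{R}^2$ through an elementary inequality in the style of the proof of Theorem~\ref{thm:quantitativepolyatorsion}.

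\textbf{Main obstacle.} The hardest step is the lower bound. We must extract a quantitative gap from the dimension-reduced Rayleigh quotient and make it explicit in $\mathcal{R}$, while keeping uniform control of $X$, $X'$ via $\mu\in(0,\pi/2)$. To do this, we would first split $[0,r]$ into the region $t\le s_0$ and the region $t>s_0$, and estimate the lost mass there using $r-s_0=s_0\mathcal{R}$.
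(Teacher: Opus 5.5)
Your upper bound is correct, and it takes a different and cleaner route than the paper. Write $m(s)=\sqrt{\nu_1(\beta,s)}\,s\in(0,\pi/2)$; better not to call it $\mu$, which the paper reserves for $\abs{\Omega_t}$. Then $m\tan m=\beta s$, and implicit differentiation gives
\[
\partial_s\nu_1(\beta,s)=-\frac{2m^3}{s^3\,(m+\sin m\cos m)},\qquad \abs{\partial_s\nu_1(\beta,s)}\le\frac{2m^2}{s^3}\le\frac{\pi^2}{2s^3}.
\]
Combine this with $\lambda_\beta(\Omega)\ge\nu_1(\beta,r(\Omega))$, the mean value theorem and $r(\Omega)-s_0=s_0\mathcal{R}(\Omega)$. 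You get the upper bound with constant $\pi^2/2\le K_1$, independent of $\beta r(\Omega)$, so there is nothing delicate about ``matching'' $K_1$. The paper instead compares with $\nu_1(\beta,t_0)$, where $t_0=\sqrt{2M(\Omega)}\in(s_0,r(\Omega)]$. It plugs $\cos(\sqrt{\nu_1(\beta,t_0)}\,s)$ into the Rayleigh quotient of $\nu_1(\beta,s_0)$ and uses an angle-addition identity. That is where the $\beta$-dependent factor in $K_1$ comes from.

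The lower bound has a genuine gap: nothing in your sketch actually compares $\lambda_\beta(\Omega)$ with $\nu_1(\beta,s_0)$. The missing idea is the change of variables $s=\mu(t)/P(\Omega)$ in the test function $g(d(x,\partial\Omega))$. It maps $t\in[0,r(\Omega)]$ onto $s\in[0,s_0]$, with the boundary going to $s=s_0$, and turns the weighted quotient into
\[
\lambda_\beta(\Omega)\le\frac{\int_0^{s_0}h'(s)^2\bigl(P(t(s))/P(\Omega)\bigr)^2\,ds+\beta h(s_0)^2}{\int_0^{s_0}h(s)^2\,ds}.
\]
This is the quotient of $\nu_1(\beta,s_0)$ with an extra factor $\le 1$ in the gradient term. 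Only in this form is $\nu_1(\beta,s_0)$ the benchmark. With $h=\cos(\sqrt{\nu_1(\beta,s_0)}\,s)$ the gap is exactly $\int_0^{s_0}h'^2\bigl(1-(P(t(s))/P(\Omega))^2\bigr)ds/\int_0^{s_0}h^2\,ds$.

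Composing an eigenfunction directly with $d$, as you describe, does not achieve this. For $g(t)=\cos\bigl(\sqrt{\nu_1(\beta,r)}\,(r-t)\bigr)$, an integration by parts shows the quotient equals $\nu_1(\beta,r)+\int_0^r gg'\abs{P'(t)}\,dt/\int_0^r g^2P(t)\,dt$. You give no estimate relating this to $\nu_1(\beta,s_0)$. The claim that the gap is ``roughly the square of the $L^2$ deviation, controlled by $\mathcal{R}^2$'' has no supporting argument. Note also that the real difficulty is uniformity as $\mathcal{R}\to 0$, not the case of $\mathcal{R}$ large.

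Even after this reduction, the exponent $4$ needs two quantitative facts at the split point $\bar t=s_0$, which you guessed correctly.
\begin{itemize}
\item The perimeter deficit $P(\Omega_{s_0})\le(1+\mathcal{R}/n)^{-1}P(\Omega)$, which gives a factor $1-q_2\ge\mathcal{R}/(2n-1)$.
\item The mass bound $\mu(s_0)\ge\frac{\mathcal{R}}{6n}\abs{\Omega}$ of Lemma \ref{lemma:M(PM)eP(MP)}. It makes the interval $[0,\bar s]$, $\bar s=\mu(s_0)/P(\Omega)$, on which the deficit acts, of length $\gtrsim\mathcal{R}s_0$.
\end{itemize}
Because $h'(0)=0$, one only gets $\int_0^{\bar s}h'^2\,ds\ge\frac{4}{3\pi^2}\nu_1^2\bar s^3$, which contributes $\mathcal{R}^3$. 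The perimeter factor supplies the fourth power.

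Your tools do not provide these facts. The bound $P(\Omega_t)\le P(\Omega)$ carries no deficit, and you never address the perimeter drop. The bound $P(\Omega_t)\ge P(\Omega)(1-t/r)^{n-1}$ only yields $\mu(s_0)\ge\frac{rP(\Omega)}{n}\bigl(\frac{\mathcal{R}}{1+\mathcal{R}}\bigr)^n$. Even granting the perimeter factor, that would give exponent $3n+1$ instead of $4$.

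Finally, this argument produces $\nu_1^2s_0^4=(\nu_1s_0^2)^2$, the square of the bracket appearing in $K_2$; compare Corollary \ref{thm:quantitativepolyaeigenvalue}. So your remark that the lower bound on $\nu_1s^2$ ``explains'' the fourth power is not supported.
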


As a corollary of Theorem \ref{thm:eigen_R} and the bound \eqref{explicit}, we can obtain the following

\begin{corollario}\label{thm:quantitativepolyaeigenvalue}
    Let $\Omega\subset\R^n$ be an open, bounded and convex set. Then, it holds

    \begin{equation}
        \label{eigen:polya:thm}
        \frac{\pi^2}{4}\left(\frac{1}{1+\frac{2}{ r(\Omega)\beta}}\right) -\frac{\lambda_\beta(\Omega)\abs{\Omega}^2}{P^2(\Omega)}\ge C_3\left(\frac{\pi^2}{4}\frac{1}{1+\frac{\pi^2 n}{4\beta r(\Omega)}}\right)^2\mathcal{R}(\Omega)^4,
    \end{equation}
with $C_3=\dfrac{1}{2\cdot 3^4 \pi (2n-1)n^3}.$
\end{corollario}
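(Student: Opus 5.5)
The plan is to combine the lower estimate of Theorem \ref{thm:eigen_R} with an upper bound for $\nu_1(\beta,s_0)\abs{\Omega}^2/P^2(\Omega)$ coming from the one-dimensional problem \eqref{1dim_intro}; this is exactly the bound behind \eqref{explicit}. Write
\begin{equation*}
\frac{\pi^2}{4}\left(\frac{1}{1+\frac{2}{r(\Omega)\beta}}\right)-\frac{\lambda_\beta(\Omega)\abs{\Omega}^2}{P^2(\Omega)}
=\left[\frac{\pi^2}{4}\left(\frac{1}{1+\frac{2}{r(\Omega)\beta}}\right)-\frac{\nu_1(\beta,s_0)\abs{\Omega}^2}{P^2(\Omega)}\right]+\left[\frac{\nu_1(\beta,s_0)\abs{\Omega}^2}{P^2(\Omega)}-\frac{\lambda_\beta(\Omega)\abs{\Omega}^2}{P^2(\Omega)}\right].
\end{equation*}
By Theorem \ref{thm:eigen_R}, the second bracket is at least $K_2\mathcal{R}(\Omega)^4$. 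Also $K_2=C_3\big(\frac{\pi^2}{4}\frac{1}{1+\frac{\pi^2 n}{4\beta r(\Omega)}}\big)^4$. So it suffices to prove two things: first, that the first bracket is nonnegative; second, that $\big(\frac{\pi^2}{4}\frac{1}{1+\frac{\pi^2 n}{4\beta r}}\big)^4\ge \big(\frac{\pi^2}{4}\frac{1}{1+\frac{\pi^2 n}{4\beta r}}\big)^2$, or else to absorb the discrepancy in powers in another way.

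For the nonnegativity of the first bracket, I use the properties of $\nu_1(\beta,\cdot)$ from subsection \ref{1D_subsec}. The eigenfunction is $\cos(\sqrt{\nu}\,s)$, and $\nu=\mu^2/s_0^2$, where $\mu\in(0,\pi/2)$ solves $\mu\tan\mu=\beta s_0$. The elementary inequality $\tan\mu\ge \frac{\mu}{1-4\mu^2/\pi^2}$ (or a comparable convexity bound) gives
\begin{equation*}
\mu^2\le \frac{\pi^2}{4}\,\frac{1}{1+\frac{\pi^2}{4\beta s_0}}\le\frac{\pi^2}{4}\,\frac{1}{1+\frac{2}{\beta s_0}}.
\end{equation*}
Since $\nu_1 s_0^2=\mu^2$ and $s_0=\abs{\Omega}/P(\Omega)$, this says that $\nu_1(\beta,s_0)\abs{\Omega}^2/P^2(\Omega)=\mu^2$. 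For convex sets, $s_0=\abs{\Omega}/P(\Omega)\le r(\Omega)$ (Proposition \ref{prop:PRM-1}, i.e.\ $\mathcal{R}\ge0$), so $\frac{1}{1+\frac{2}{\beta s_0}}\le \frac{1}{1+\frac{2}{\beta r}}$. This yields the nonnegativity of the first bracket, which is precisely \eqref{explicit} read at the level of $\nu_1$.

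For the constants, note that $a:=\frac{\pi^2}{4}\frac{1}{1+\frac{\pi^2 n}{4\beta r}}$ may be smaller than $1$, in which case $a^4\le a^2$. The naive comparison therefore fails, and this is the main obstacle. My first attempt is to exploit the slack in the first bracket. I would keep the quantitative gap from $s_0\le r$: the difference $\frac{1}{1+\frac{2}{\beta r}}-\frac{1}{1+\frac{\pi^2}{4\beta s_0}}$ is bounded below by a multiple of $\frac{r-s_0}{r}=\frac{\mathcal{R}}{1+\mathcal{R}}$, which already dominates $\mathcal{R}^4$ up to constants depending on $\beta r$. I would combine this with the fact that $\mathcal{R}\le n$, so that $\mathcal{R}^4\le n^2\mathcal{R}^2$, to trade powers. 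If the paper's intended reading is instead that the factor in $K_2$ is squared, the result follows immediately from the splitting above. In that case I would recheck the exponent in the proof of Theorem \ref{thm:eigen_R}, where the $\lambda^4$ factor likely arises from a bound like $\lambda_\beta\ge a/r^2$ raised to a power; using $\lambda_\beta\le \nu_1\le \pi^2/(4s_0^2)$ there should allow replacing two of the four factors by $1$ after normalization.
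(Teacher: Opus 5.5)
Your splitting is the paper's route (Theorem \ref{thm:eigen_R} plus the bound behind \eqref{explicit}), but two steps fail as written.

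First, the inequality you use for the first bracket is reversed. By the two-sided bound on $\tan$ in Remark \ref{argument}, $\tan\mu\le\frac{\mu}{1-4\mu^2/\pi^2}$ on $(0,\pi/2)$. So your intermediate claim $\mu^2\le\frac{\pi^2}{4}\bigl(1+\frac{\pi^2}{4\beta s_0}\bigr)^{-1}$ is false: it reverses the \emph{lower} bound in \eqref{bound:mu:s0}. For instance, for $\beta s_0=1$ one gets $\mu^2\approx0.740>0.712$. The bound you need comes from the other half, $\tan\mu\ge\frac{8\mu}{\pi^2-4\mu^2}$. Inserted in $\mu\tan\mu=\beta s_0$, it gives $\nu_1(\beta,s_0)s_0^2\le\frac{\pi^2}{4}\bigl(1+\frac{2}{\beta s_0}\bigr)^{-1}\le\frac{\pi^2}{4}\bigl(1+\frac{2}{\beta r(\Omega)}\bigr)^{-1}$. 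With this correct bound, the slack it gives in the first bracket is only $\frac{\pi^2}{2}\frac{\beta r\,\mathcal{R}(\Omega)}{(\beta r+2)(\beta r+2+2\mathcal{R}(\Omega))}$, which tends to $0$ as $\beta r\to\infty$. So your ``first attempt'' cannot produce $C_3a^2\mathcal{R}(\Omega)^4$ for all $\beta r$. In any case, ``up to constants depending on $\beta r$'' is not the stated inequality. For large $\beta r$ you fall back on the printed factor $a^4$, and for $a>1$ that is stronger than what the proof of Theorem \ref{thm:eigen_R} yields.

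Second, and this is the real gap, you never settle the exponent of $a$. The proof of Theorem \ref{thm:eigen_R} actually establishes $\bigl(\nu_1(\beta,s_0)-\lambda_\beta(\Omega)\bigr)s_0^2\ge C_3\bigl(\nu_1(\beta,s_0)s_0^2\bigr)^2\mathcal{R}(\Omega)^4$.
The factor $\nu_1^2$ in \eqref{eq:estimatebars3} comes from $h'(s)^2=\nu_1\sin^2(\sqrt{\nu_1}s)$ together with $\sin^2(\sqrt{\nu_1}s)\ge\frac{4}{\pi^2}\nu_1s^2$. The four powers of $s_0$ come from $\bar s^3\ge\bigl(\frac{\mathcal{R}(\Omega)}{6n}\bigr)^3s_0^3$ in \eqref{eq:estimatebars}, the factor $P(\Omega)/\abs{\Omega}=s_0^{-1}$, and the final multiplication by $s_0^2$.
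Since this is a lower bound, $\nu_1s_0^2$ must be estimated from \emph{below}. The left half of \eqref{bound:mu:s0} and $P(\Omega)r(\Omega)\le n\abs{\Omega}$ give $\nu_1s_0^2\ge a$, hence the factor $a^2$. Upper bounds such as $\lambda_\beta\le\nu_1\le\pi^2/(4s_0^2)$ cannot ``replace factors by $1$'' there, so the mechanism you guess is backwards.
Thus the exponent $4$ in $K_2$ is a misprint for $2$; the last display of that proof indeed carries the square. The corollary then follows at once by adding this display to the correct upper bound for $\nu_1s_0^2$ above.

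(Redoing the arithmetic, you will also get $\frac{4}{3\pi^2}\cdot\frac{1}{6^3}=\frac{1}{2\cdot3^4\pi^2}$ in front of $\frac{1}{(2n-1)n^3}$. So the argument yields $\pi^2$ rather than $\pi$ in $C_3$; that slip is the paper's, not yours.)
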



Finally, let us note that another remainder term was introduced in \cite{AGS}, which we denote by $\mathcal{A}(\Omega)$, and it is a measure of the "thinness" of the set

$$\mathcal{A}(\Omega)= \frac{w_\Omega}{d(\Omega)},$$
  where $w_\Omega$ and $d(\Omega)$ represent the minimal width and diameter of the set (for the definition see Section \ref{sec2:preliminaries}).  As proved in \cite{AGS}, there exists a positive dimensional constant $K=K(n)>0$, such that $\mathcal{R}(\Omega)\ge K\mathcal{A}(\Omega)$, and the reverse inequality is not true. For this reason, the lower bound in \eqref{eq:MakaiRobin}, \eqref{polya:lower:R} and \eqref{eigen:polya:thm} can be rewritten also in terms of $\mathcal{A}(\Omega)$, where the sharp exponent is ensured only in equality \eqref{eq:MakaiRobin}.
For completeness, in Section \ref{section_alpha}, we manage to prove that the exponent is sharp even for the other two inequalities, as proved in Proposition \ref{prop:quantitative_wd} and in Proposition \ref{prop:quantitative_wd2}.\\

\noindent \textbf{Plan of the paper:} In Section $2$ we recall some basic notions and definitions, and we recall some classical results, focusing in particular on the class of convex sets. In Section $3$ we prove the upper bounds for the Robin Torsion, that are Theorem \ref{thm:TlessD} and Corollary \ref{thm:geometricmakai}. In Section $4$ and $5$ we prove the quantitative estimates involving $\mathcal{R}(\Omega)$, which are the content of Theorems \ref{thm:quantitativemakai}-\ref{thm:quantitativepolyatorsion}  and \ref{thm:eigen_R},  giving also some remark about the nature of the optimal sequences. Lastly, in section $6$ we prove quantitative estimates involving the remainder term $\mathcal{A}(\Omega)$. 

\section{Preliminaries}\label{sec2:preliminaries}
\subsection{Notations and basic facts}
 Throughout this article, $|\cdot|$ will denote the Euclidean norm in $\mathbb{R}^n$,
 while $\cdot$ is the standard Euclidean scalar product for  $n\geq2$. By $\mathcal{H}^k(\cdot)$, for $k\in [0,n)$, we denote the $k-$dimensional Hausdorff measure in $\mathbb{R}^n$. The perimeter of $\Omega$ in $\mathbb{R}^n$ will be denoted by $P(\Omega)$ and, if $P(\Omega)<\infty$, we say that $\Omega$ is a set of finite perimeter. In our case, $\Omega$ is a bounded, open and convex set; this ensures us that $\Omega$ is a set of finite perimeter and that $P(\Omega)=\mathcal{H}^{n-1}(\partial\Omega)$. Some references for results relative to the sets of finite perimeter and for the coarea formula are, for instance, \cite{ambrosio2000functions,maggi2012sets}.

 We give now the minimal width (or thickness) of a convex set.
\begin{definizione}\label{support}
  Let $\Omega$ be a bounded, open and convex set of $\mathbb{R}^n$. The support function of $\Omega$ is defined as
  \begin{equation*}
    h_\Omega(y)=\sup_{x\in \Omega}\left(x\cdot y\right), \qquad y\in \mathbb{R}^n .
  \end{equation*}

The width of $\Omega$ in the direction $y \in \mathbb{R}$ is defined as 
  \begin{equation*}
    \omega_{\Omega}(y)=h_{\Omega}(y)+h_{\Omega}(-y)
    \end{equation*}
 and the minimal width of $\Omega$ as
\begin{equation*}
    w_\Omega=\min\{  \omega_{\Omega}(y)\,|\; y\in\mathbb{S}^{n-1}\}.
\end{equation*}
\end{definizione}
We will denote by $r(\Omega)$ the inradius of $\Omega$, i.e.
 \begin{equation}
     \label{inradiuss}
     r(\Omega)=\sup\{r\in \mathbb R: B_r(x)\subset \Omega, x\in \Omega\},
 \end{equation}
and by $\diam(\Omega)$ the diameter of $\Omega$, that is
\begin{equation*}
    \diam(\Omega) = \sup_{x,y\in\Omega}|x-y|.
\end{equation*}
\begin{definizione}
    Let $\Omega_\ell$ be a sequence of non-empty, bounded, open and convex sets of $\mathbb{R}^n$. $\Omega_\ell$ is a sequence of  thinning domains if
    \begin{equation*}
        \dfrac{w_{\Omega_\ell}}{\diam(\Omega_\ell)}\xrightarrow{\ell \to 0}0.
    \end{equation*}

\end{definizione}
A particular sequence of thinning domains are what we call a slab-like domain.
\begin{definizione} 
Let $K\subset \R^{n-1}$ be a bounded convex set, we define a sequence of \emph{slab-like domains} as
$$\Omega_\ell=(-a_\ell,a_\ell)\times \frac{1}{\ell}K, \qquad \; \ell \to 0,$$
   where $a_\ell$ is a positive converging sequence. 

\noindent  On the other hand, we define the sequence of \emph{thinning cylinders} as sequence 
    \begin{equation}\label{thin_rect}
    C_\ell=\left(-\frac{\ell}{2}, \frac{\ell}{2}\right) \times K.
\end{equation}
\end{definizione}

We recall in the following Proposition the relation between the inradius and the minimal width (see for example \cite{ santalo_sobre, scott_family, scott_convex_2000,fenchel_bonnesen}). 
\begin{prop}
\label{prop:r>w}
Let $\Omega$ be a bounded, open, and convex set of $\mathbb{R}^n$. Then, the following estimates hold:
\begin{equation}
\label{eq:lowboundinradius}
    \displaystyle \frac{w_\Omega}{2}\ge r(\Omega)\ge\begin{cases}
    w_{\Omega} \displaystyle{\frac{\sqrt{n+2}}{2n+2}} & n \,\, \text{even}\\ \\
    w_{\Omega} \displaystyle{\frac{1}{2\sqrt{n}}} & n \,\, \text{odd},
    \end{cases}
\end{equation}
\end{prop}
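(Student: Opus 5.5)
The upper bound $r(\Omega)\le w_\Omega/2$ is elementary, and I would prove it first. Fix a ball $B_r(x_0)\subset\Omega$ and any direction $y\in\mathbb S^{n-1}$. The points $x_0\pm ty$ with $t<r$ belong to $\Omega$, so
\[
h_\Omega(y)\ge x_0\cdot y+r \quad\text{and}\quad h_\Omega(-y)\ge -x_0\cdot y+r .
\]
Adding the two gives $\omega_\Omega(y)\ge 2r$. Taking the supremum over admissible $r$ and then the minimum over $y$ yields $w_\Omega\ge 2r(\Omega)$.

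For the lower bound I would follow Steinhagen's classical argument. The goal is the sharp inequality $w_\Omega\le 2\sqrt n\, r$ for $n$ odd and $w_\Omega\le r\,(2n+2)/\sqrt{n+2}$ for $n$ even, attained by the regular simplex.

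First, set $r=r(\Omega)$ and take an incenter $x_0$ (by compactness the supremum in \eqref{inradiuss} is attained on $\overline\Omega$). Since $B_r(x_0)$ cannot be enlarged, the contact set $\partial B_r(x_0)\cap\partial\Omega$ consists of points $x_0+ru_i$ whose outer normals $u_i$ have $0$ in their convex hull. Otherwise there would be a direction $e$ with $u_i\cdot e<0$ for all $i$, and translating the ball slightly along $e$ would free it from the boundary, contradicting maximality. By Carathéodory we can keep $m+1\le n+1$ normals with $\sum\lambda_i u_i=0$, $\lambda_i>0$.

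Then $\Omega\subset S:=\{x:(x-x_0)\cdot u_i< r,\ i=0,\dots,m\}$. When $m<n$, $S$ is a cylinder over an $m$-dimensional simplex-type cone. In every case it suffices to bound the minimal width of the polyhedron $S$, since $w_\Omega\le w_S$.

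The key step is to find a direction $y$ with $\omega_S(y)\le c_n r$. Following Steinhagen, I would split the index set into two groups $I,J$ of sizes as balanced as possible, $\lfloor (m+1)/2\rfloor$ and $\lceil (m+1)/2\rceil$. The candidate direction is $y$ proportional to $\sum_{i\in I}\lambda_iu_i=-\sum_{j\in J}\lambda_ju_j$. The slab between the supporting hyperplanes with normals $\pm y$ then contains $S$, and its width is estimated by writing $y$ as a positive combination of the $u_i$, using $(x-x_0)\cdot u_i<r$. This bounds $\omega_S(y)$ by $r$ times a quantity depending only on the $\lambda_i$ and the Gram matrix of the $u_i$.

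One then optimizes over all configurations of $m+1$ unit vectors with $0$ in their convex hull, and over the choice of the partition. A symmetrization/extremality argument shows the worst case is the regular simplex ($m=n$, $u_i\cdot u_j=-1/n$, equal $\lambda_i$). For that case a direct computation with $k=\lfloor (n+1)/2\rfloor$ gives
\[
|y|^2=k\Big(1+\frac1n\Big)-\frac{k^2}{n}\cdot\frac{n+1}{n}\cdot n\,(\dots),
\]
and this evaluates to $2\sqrt n$ for $n$ odd and $(2n+2)/\sqrt{n+2}$ for $n$ even. Hence $w_S\le c_n r$, which is \eqref{eq:lowboundinradius}.

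The main obstacle is this extremal step: proving that, among all admissible normal configurations, the best balanced-partition direction is worst for the regular simplex. I would first try a convexity/averaging argument, averaging $\omega_S(y_I)^2$ over all balanced partitions $I$ and bounding the average via $\sum\lambda_iu_i=0$. Since this is a classical known result, in the paper I would ultimately just cite the references listed before the statement (Steinhagen, Santaló, Scott) rather than reprove the extremal analysis.
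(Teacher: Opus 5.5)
The paper does not prove this proposition: it recalls Steinhagen's theorem from the references listed just before the statement, so your argument is being measured only against a citation.

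\textbf{Upper bound and reduction.} Your proof of $w_\Omega\ge 2r(\Omega)$ is correct and complete. For the lower bound, the following steps are sound:
\begin{itemize}
\item the contact-normal reduction to the polyhedron $S$;
\item the estimate $\omega_S\big(y_I/\abs{y_I}\big)\le r/\abs{y_I}$ for $y_I=\sum_{i\in I}\lambda_iu_i$, normalizing $\sum_i\lambda_i=1$.
\end{itemize}

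\textbf{The missing step.} The decisive claim, that the regular simplex is the worst configuration, is only asserted. Your displayed expression for $\abs{y}^2$ is also garbled. For $\lambda_i=\frac1{n+1}$, $u_i\cdot u_j=-\frac1n$ and $\abs{I}=k$, the correct value is $\abs{y_I}^2=\frac{k(n+1-k)}{n(n+1)^2}$. So, as you acknowledge, the sketch is not yet a proof. Falling back on the citation leaves you exactly where the paper is.

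\textbf{Your averaging idea closes the gap.} No extremality or symmetrization step is needed. Let $N=m+1\le n+1$, and put $\sigma_i=1$ for $i\in I$ and $\sigma_i=-1$ otherwise. From $\sum_i\lambda_iu_i=0$ you get
\begin{itemize}
\item $y_I=\frac12\sum_i\sigma_i\lambda_iu_i$, and
\item $\sum_{i\ne j}\lambda_i\lambda_j\,u_i\cdot u_j=-\sum_i\lambda_i^2$.
\end{itemize}
Now average $4\abs{y_I}^2$ over all $I$ with $\abs{I}=\lfloor N/2\rfloor$. On these $I$, the quantity $\sum_{i\ne j}\sigma_i\sigma_j=(\sum_i\sigma_i)^2-N$ is constant. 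By symmetry, each product $\sigma_i\sigma_j$ with $i\ne j$ therefore has average $-\frac1{N-1}$ if $N$ is even and $-\frac1N$ if $N$ is odd. Hence the average of $4\abs{y_I}^2$ equals
\[
\Big(1+\frac1{N-1}\Big)\sum_i\lambda_i^2\quad(N\ \text{even}),\qquad \Big(1+\frac1N\Big)\sum_i\lambda_i^2\quad(N\ \text{odd}),
\]
and $\sum_i\lambda_i^2\ge\frac1N$ by Cauchy--Schwarz.

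Consequently some balanced $I$ gives
\begin{itemize}
\item $w_\Omega\le\omega_S\big(y_I/\abs{y_I}\big)\le 2\sqrt{N-1}\,r$ for $N$ even;
\item $w_\Omega\le\frac{2N}{\sqrt{N+1}}\,r$ for $N$ odd.
\end{itemize}
These bounds are nondecreasing in $N$. At the parity changes this reduces to $N^2\le N(N+1)$ and $(N-1)(N+2)\le(N+1)^2$. So $N=n+1$ is the worst case. It yields exactly $w_\Omega\le 2\sqrt n\,r$ for $n$ odd and $w_\Omega\le \frac{2(n+1)}{\sqrt{n+2}}\,r$ for $n$ even, which is the stated lower bound on $r(\Omega)$.

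With this step filled in, your argument becomes a short self-contained proof of both inequalities, whereas the paper relies entirely on the classical references.
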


\noindent Moreover, the following estimate involving the perimeter and the diameter holds
\begin{equation}\label{eq:perdiam}
\displaystyle{P(\Omega)\le n\omega_n \left(\frac{\diam(\Omega)}{2}\right)^{n-1} }.
 \end{equation}

\subsection{Distance to the boundary and inner parallel sets}\label{innere}

Let $\Omega\subset\mathbb{R}^n$ be a nonempty, bounded, open and convex set.  
We introduce the distance from the boundary 
\[
d(x,\partial\Omega)=\inf_{y\in\partial\Omega}|x-y|,\qquad x\in\Omega.
\]
As a consequence of the convexity of $\Omega$, the function $d(\cdot,\partial\Omega)$ is concave in $\Omega$.

For every $t\in[0,r(\Omega)]$, where $r(\Omega)$ denotes the inradius of $\Omega$, we define the associated inner parallel sets by
\[
\Omega_t:=\{x\in\Omega:\ d(x,\partial\Omega)>t\}.
\]
We use the notation
\[
\mu(t):=|\Omega_t|,\qquad P(t):=P(\Omega_t),\qquad t\in[0,r(\Omega)].
\]
Being $|\nabla d|=1$ a.e. in $\Omega$, the Coarea formula gives
\[
\mu(t)=\int_{\{d>t\}}dx
      =\int_t^{r(\Omega)}\!\!\int_{\{d=s\}} d\mathcal{H}^{n-1}\,ds
      =\int_t^{r(\Omega)} P(s)\,ds.
\]
As a consequence, the function $\mu$ is absolutely continuous and decreasing in $[0,r(\Omega)]$, and satisfies
\begin{equation}\label{eq:dermu}
\mu'(t)=-P(t)\qquad\text{for a.e. }t\in(0,r(\Omega)).
\end{equation}

The concavity of the distance function, combined with the Brunn–Minkowski inequality for the perimeter (see \cite[Theorem 7.4.5]{schneider}), implies that the map
\[
t\longmapsto P(t)^{\frac{1}{n-1}}
\]
is concave in $[0,r(\Omega)]$. In particular, it is absolutely continuous in $(0,r(\Omega))$ and admits a finite right derivative at $t=0$. Since this function is strictly decreasing, it follows that $P(t)$ is strictly decreasing as well. Moreover, concavity ensures that
\[
\big(P(t)^{\frac{1}{n-1}}\big)''\le 0
\]
in the sense of distributions.

Integrating \eqref{eq:dermu} from $0$ to $t$ and using the fact that $P(s)\le P(\Omega)$ for convex sets, we obtain the estimate
\begin{equation}\label{steiner-measure}
\mu(t)\ge |\Omega|-P(\Omega)\,t \qquad\text{for a.e. }t\in[0,r(\Omega)].
\end{equation}
We recall some bound on the perimeter and the measure of the inner parallel sets proved in \cite{AGS}.

\begin{lemma}
    Let $\Omega$ be a non-empty, bounded, open and convex set of $\R^n$. Then, there exists a positive dimensional constant $c_n$, such that

\begin{equation}
    \label{perimeter_estimate}
    P(t)\leq P(\Omega) - c_n \frac{\abs{\Omega} - \mu(t)}{P^{\frac{1}{n-1}}(\Omega)}.
\end{equation}
\end{lemma}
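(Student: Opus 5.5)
We need to prove $P(t)\le P(\Omega)-c_n\,\frac{|\Omega|-\mu(t)}{P(\Omega)^{1/(n-1)}}$. The plan is to exploit the concavity of $f(t):=P(t)^{\frac{1}{n-1}}$ on $[0,r(\Omega)]$, recalled above, together with $\mu'=-P$. Since $f$ is concave and decreasing, for $0\le s\le t$ the graph of $f$ on $[0,t]$ lies above the chord through $(0,f(0))$ and $(t,f(t))$:
\[
f(s)\ge f(0)-\frac{s}{t}\bigl(f(0)-f(t)\bigr).
\]

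\textbf{Step 1: reduce to a bound on $f(0)-f(t)$.} Write $\delta:=f(0)-f(t)\ge 0$, with $f(0)=P(\Omega)^{1/(n-1)}$. By the mean value theorem applied to $x\mapsto x^{n-1}$ between $f(t)$ and $f(0)$,
\[
P(\Omega)-P(t)\ge f(t)^{n-2}\,\delta ,
\]
which is useful only when $f(t)$ is comparable to $f(0)$. A cleaner route is to bound $|\Omega|-\mu(t)$ from above instead. We have $|\Omega|-\mu(t)=\int_0^t P(s)\,ds\le tP(\Omega)$. So it suffices to show
\[
P(\Omega)-P(t)\ge c\, t\, P(\Omega)^{\frac{n-2}{n-1}},
\]
that is, a linear-in-$t$ decrease of the perimeter whose slope is comparable to $P(\Omega)^{(n-2)/(n-1)}$.

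\textbf{Step 2: the geometric input, a slope bound at $0$.} For convex $\Omega$ the inner parallel set satisfies $\Omega_t+tB_1\subset\Omega$. Using perimeter monotonicity under inclusion of convex sets and the Brunn--Minkowski inequality for quermassintegrals (or simply Steiner's formula, whose first mixed term is $\ge$ an isoperimetric-type quantity), we get
\[
P(\Omega)\ge P(\Omega_t+tB_1)\ge \bigl(P(t)^{\frac1{n-1}}+t\,P(B_1)^{\frac1{n-1}}\bigr)^{n-1}.
\]
Hence
\[
f(0)-f(t)\ge t\,(n\omega_n)^{\frac1{n-1}},
\]
which gives a uniform slope $|f'|\ge (n\omega_n)^{1/(n-1)}$. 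Then
\[
P(\Omega)-P(t)=f(0)^{n-1}-f(t)^{n-1}\ge f(0)^{n-2}\bigl(f(0)-f(t)\bigr)
\]
(since $x^{n-1}-y^{n-1}\ge x^{n-2}(x-y)$ for $0\le y\le x$), so
\[
P(\Omega)-P(t)\ge (n\omega_n)^{\frac1{n-1}}\, t\, P(\Omega)^{\frac{n-2}{n-1}}.
\]

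\textbf{Step 3: conclude.} Combining Steps 1 and 2,
\[
P(\Omega)-P(t)\ge (n\omega_n)^{\frac1{n-1}}\,P(\Omega)^{-\frac{1}{n-1}}\cdot tP(\Omega)\ge c_n\,\frac{|\Omega|-\mu(t)}{P(\Omega)^{\frac1{n-1}}},
\]
with $c_n=(n\omega_n)^{1/(n-1)}$. The main obstacle is justifying the mixed-volume inequality in Step 2, namely that the perimeter of $K+tB_1$ dominates $\bigl(P(K)^{1/(n-1)}+tP(B_1)^{1/(n-1)}\bigr)^{n-1}$. This is exactly the Brunn--Minkowski inequality for the quermassintegral $W_1$, which is $(n-1)^{-1}$-concave under Minkowski addition (\cite[Theorem 7.4.5]{schneider}). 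Also needed is that convex inclusion is monotone for perimeter; this is standard via the nearest-point projection being $1$-Lipschitz. If $n=1$ issues arise they are trivial, and for $n=2$ the argument reduces to $P(t)\le P(\Omega)-2\pi t$.
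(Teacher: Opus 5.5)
Your proof is correct. The paper does not prove this lemma; it is quoted from \cite{AGS}, so there is no in-paper argument to match, and your argument stands on its own.

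Your route is global. The inclusion $\Omega_t+t\overline{B_1}\subset\Omega$, monotonicity of perimeter under inclusion of convex sets, and the Brunn--Minkowski inequality for $W_1=P/n$ together give $P(\Omega)^{\frac1{n-1}}-P(t)^{\frac1{n-1}}\ge (n\omega_n)^{\frac1{n-1}}t$. Brunn--Minkowski here is the same \cite[Theorem 7.4.5]{schneider} the paper invokes for the concavity of $P(t)^{\frac1{n-1}}$. From this you get $P(\Omega)-P(t)\ge (n\omega_n)^{\frac1{n-1}}\,t\,P(\Omega)^{\frac{n-2}{n-1}}$, which is already stronger than the lemma because $\abs{\Omega}-\mu(t)\le tP(\Omega)$.

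The way the statement is written, with $\abs{\Omega}-\mu(t)=\int_0^tP(s)\,ds$, suggests the source argues infinitesimally instead. One would combine the one-sided derivative inequality $-\frac{d}{dt}W_1(\Omega_t)\ge(n-1)W_2(\Omega_t)$ for inner parallel bodies with the Aleksandrov--Fenchel inequality $W_2\ge\omega_n^{\frac1{n-1}}W_1^{\frac{n-2}{n-1}}$. This gives $-P'(s)\ge(n-1)(n\omega_n)^{\frac1{n-1}}P(s)^{\frac{n-2}{n-1}}\ge(n-1)(n\omega_n)^{\frac1{n-1}}P(s)/P(\Omega)^{\frac1{n-1}}$. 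Integrating over $(0,t)$ then produces $\abs{\Omega}-\mu(t)$ directly, with a constant larger than yours by the factor $n-1$. That route needs a derivative formula for quermassintegrals of inner parallel bodies; yours needs only the classical Brunn--Minkowski inequality and no differentiability of $t\mapsto P(t)$.

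Two minor points.

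\emph{The endpoint.} At $t=r(\Omega)$ the set $\Omega_t$ is empty, so Brunn--Minkowski cannot be applied there. Obtain that case by letting $t\to r(\Omega)^-$ and using $\mu(t)\to 0$ together with $P(r(\Omega))=0$.

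\emph{The Steiner remark.} Your parenthetical alternative via Steiner's formula works only if Aleksandrov--Fenchel is applied to every term of the expansion of $P(\Omega_t+tB_1)$; doing so reproduces exactly the Brunn--Minkowski bound you use. Keeping only the linear term yields $P(t)^{\frac{n-2}{n-1}}$ instead of $P(\Omega)^{\frac{n-2}{n-1}}$, which would require an extra case distinction to conclude.
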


\begin{lemma}
    Let $\Omega$ be a non-empty, bounded, open and convex set of $\R^n$. Then

\begin{equation}
    \label{measure_estimate1}
    \mu(t)\leq P(\Omega)(r(\Omega)-t)+ \frac{(r(\Omega)-t)^2}{2(n-1)}P'(t).
\end{equation}
\end{lemma}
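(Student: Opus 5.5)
The plan is to obtain a pointwise linear upper bound for $P(s)$ on $[t,r(\Omega)]$ from the concavity of $f(s):=P(s)^{\frac{1}{n-1}}$, and then to integrate it using the representation $\mu(t)=\int_t^{r(\Omega)}P(s)\,ds$ from the Coarea formula. Throughout, fix $t\in(0,r(\Omega))$ at which $f$ (hence $P$) is differentiable; this holds for a.e. $t$, and the remaining values of $t$ are treated with right derivatives, which exist by concavity. Write $r=r(\Omega)$.

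\emph{Step 1 (tangent line bound).} Since $f$ is concave, for every $s\in[t,r]$ we have $0\le f(s)\le f(t)+f'(t)(s-t)$. Set $a=f(t)>0$, $b=f'(t)\le 0$ (recall that $P$, hence $f$, is decreasing). Then $a+b(s-t)\in[0,a]$ on $[t,r]$, because it dominates $f(s)\ge 0$ and $b\le 0$. Hence, with $y:=\frac{b}{a}(s-t)\in[-1,0]$,
\[
P(s)=f(s)^{n-1}\le a^{n-1}(1+y)^{n-1}.
\]

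\emph{Step 2 (elementary inequality).} Since $1+y\in[0,1]$ and $n-1\ge 1$, we have $(1+y)^{n-1}\le 1+y$. Therefore
\[
P(s)\le a^{n-1}+a^{n-2}b\,(s-t)=P(t)+\frac{P'(t)}{n-1}(s-t),
\]
where we used the chain rule $P'(t)=(n-1)f(t)^{n-2}f'(t)$. In this step it is essential to use the elementary inequality rather than convexity of $y\mapsto(1+y)^{n-1}$: convexity would give a tangent-line bound in the wrong direction. This is exactly what produces the factor $\frac{1}{n-1}$ in the statement.

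\emph{Step 3 (integration).} Using $P(t)\le P(\Omega)$, which holds by monotonicity of inner parallel sets of a convex set, we integrate over $s\in[t,r]$:
\[
\mu(t)=\int_t^{r}P(s)\,ds\le P(\Omega)(r-t)+\frac{P'(t)}{n-1}\int_t^{r}(s-t)\,ds = P(\Omega)(r-t)+\frac{(r-t)^2}{2(n-1)}P'(t),
\]
which is \eqref{measure_estimate1}.

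The only delicate point is regularity: we need to justify that $P'(t)$ makes sense and that the chain rule holds. Both follow from the absolute continuity and concavity of $f$ recalled above. At $t=0$, and at the points where $f$ is not differentiable, the tangent inequality in Step 1 still holds if $f'(t)$ is replaced by the right derivative, so the same argument applies with right derivatives.
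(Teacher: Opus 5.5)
Your proof is correct. The paper does not prove this lemma itself; it quotes it from \cite{AGS}, so there is no in-paper argument to compare against. Your derivation is self-contained and uses only facts recalled in Section 2:
\begin{itemize}
\item the tangent-line bound for the concave function $P^{\frac{1}{n-1}}$;
\item the elementary estimate $(1+y)^{n-1}\le 1+y$ for $1+y\in[0,1]$, together with the chain rule;
\item integration of $\mu(t)=\int_t^{r(\Omega)}P(s)\,ds$.
\end{itemize}
This route produces exactly the constant $\frac{1}{2(n-1)}$ of the statement.

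Before the last step you in fact have the stronger bound $\mu(t)\le P(t)(r(\Omega)-t)+\frac{(r(\Omega)-t)^2}{2(n-1)}P'(t)$. Since $P'\le 0$, this also yields $\mu(t)\le P(t)(r(\Omega)-t)$, which the paper invokes without proof in the proof of Theorem~\ref{thm:quantitativemakai}.
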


\subsection{Useful results about the remainder terms}
We define the following two measures of asymmetry
\begin{equation*}
\mathcal{A}(\Omega):=\frac{w_\Omega}{\diam(\Omega)}, \qquad \qquad \text{and} \qquad \qquad  \mathcal{R}(\Omega):=\frac{P(\Omega)r(\Omega)}{\abs{\Omega}}-1.
\end{equation*}
We recall the following bounds, proved in \cite{fenchel_bonnesen} and then generalized in any dimension and for more general operators (see for instance \cite{DBN}. 

\begin{prop}\label{prop:PRM-1}
 Let $\Omega$ be a non-empty, bounded, open and convex set of $\mathbb{R}^n$. Then, 
 \begin{equation}\label{convex_estimates}
1  < \dfrac{P(\Omega) r(\Omega)}{|\Omega|}  \le n. 
 \end{equation}
The lower bound is sharp on a sequence of thinning cylinders, while the upper bound is achieved for instance if $\Omega_{(1-t)R_{\Omega}}= t \Omega $ for $t \in (0,1)$ or on a sequence of thinning pyramids. 
\end{prop}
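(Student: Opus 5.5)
The two inequalities in \eqref{convex_estimates} come from two different representations of $\abs{\Omega}$: a cone (divergence) decomposition from an incenter for the upper bound, and the coarea representation through inner parallel sets from Subsection \ref{innere} for the lower bound.

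\textbf{Upper bound.} Fix $x_0\in\Omega$ with $B_{r(\Omega)}(x_0)\subset\Omega$; such a point exists by compactness. Since $\Omega$ is convex it has Lipschitz boundary, so the divergence theorem applied to the field $x-x_0$ gives
\[
n\abs{\Omega}=\int_{\partial\Omega}(x-x_0)\cdot\nu(x)\,d\mathcal H^{n-1}(x).
\]
At $\mathcal H^{n-1}$-a.e. $x\in\partial\Omega$ there is a supporting hyperplane with outer normal $\nu(x)$. The integrand $(x-x_0)\cdot\nu(x)$ is the distance from $x_0$ to that hyperplane. Because the hyperplane does not cut the ball $B_{r(\Omega)}(x_0)$, this distance is at least $r(\Omega)$. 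Hence $n\abs{\Omega}\ge r(\Omega)P(\Omega)$, which is the right-hand inequality.

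\textbf{Lower bound.} Use $\mu(0)=\abs{\Omega}=\int_0^{r(\Omega)}P(s)\,ds$ from Subsection \ref{innere}. The map $s\mapsto P(s)$ is strictly decreasing, so $P(s)<P(\Omega)$ for every $s\in(0,r(\Omega)]$. Integrating gives $\abs{\Omega}<P(\Omega)r(\Omega)$, which is the strict left-hand inequality. The strictness is exactly where the strict monotonicity of $P(t)$, coming from the concavity of $P(t)^{1/(n-1)}$, is needed.

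\textbf{Sharpness.} For the thinning cylinders $C_\ell$ in \eqref{thin_rect} with $\ell\to0$, we have:
\begin{itemize}
\item $\abs{C_\ell}=\ell\,\mathcal H^{n-1}(K)$;
\item $P(C_\ell)=2\mathcal H^{n-1}(K)+\ell\,\mathcal H^{n-2}(\partial K)$;
\item $r(C_\ell)=\ell/2$ once $\ell$ is small compared with the inradius of $K$.
\end{itemize}
Therefore $P r/\abs{\cdot}\to 1$ along this sequence.

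For the upper bound, suppose $\Omega_{(1-t)r}=t\Omega$ with $r=r(\Omega)$. Writing $s=(1-t)r$, homogeneity of the perimeter gives
\[
P(s)=t^{n-1}P(\Omega)=(1-s/r)^{n-1}P(\Omega).
\]
Integrating over $(0,r)$ yields $\abs{\Omega}=P(\Omega)r/n$, so equality holds. This covers balls and, more generally, polytopes circumscribed about a ball. Alternatively, equality in the cone decomposition can be read off directly: it holds iff every supporting hyperplane is tangent to the inball. For thinning pyramids, I would compute the ratio explicitly on a sequence of flattening pyramids over a fixed base and show that it tends to $n$.

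\textbf{Main obstacle.} I expect the only delicate points to be the a.e. justification of the supporting-hyperplane distance bound in the divergence formula, and the explicit asymptotic computation for the pyramids.
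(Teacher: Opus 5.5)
The paper does not prove this proposition; it only recalls it from the literature, so there is no argument to compare against. Your proof of the two inequalities is correct and self-contained. The identity $n\abs{\Omega}=\int_{\partial\Omega}(x-x_0)\cdot\nu\,d\mathcal H^{n-1}$, together with $(x-x_0)\cdot\nu(x)\ge r(\Omega)$ wherever the outer normal exists, gives the upper bound. The identity $\abs{\Omega}=\int_0^{r(\Omega)}P(s)\,ds\le r(\Omega)P(\Omega)$ gives the lower one. The cylinder computation and the check for $\Omega_{(1-t)r}=t\Omega$ are also right.

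A route closer to Subsection \ref{innere} would obtain the upper bound from the same coarea identity. Concavity of $P(t)^{\frac{1}{n-1}}$ on $[0,r(\Omega)]$ and $P\ge 0$ give $P(t)\ge (1-t/r(\Omega))^{n-1}P(\Omega)$. Integrating yields $\abs{\Omega}\ge r(\Omega)P(\Omega)/n$, with equality iff this bound is attained for every $t$; that is exactly what your self-similarity computation verifies. Your cone argument instead avoids Brunn--Minkowski and reads the equality case off geometrically.

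Three points need fixing.

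\textbf{Strict monotonicity of $P$.} Strict decrease of $P$ does not follow from concavity of $P^{\frac{1}{n-1}}$, since constants are concave. Justify $P(t)<P(\Omega)$ for $t\in(0,r(\Omega))$ directly. For example, $\Omega_t+B_t\subset\Omega$, so Steiner's formula and monotonicity of perimeter under inclusion of convex sets give $P(t)<P(\Omega_t+B_t)\le P(\Omega)$.

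\textbf{Equality characterization.} Your aside ``equality iff every supporting hyperplane is tangent to the inball'' is false. The square $(-1,1)^2$ gives equality, yet the supporting line $x_1+x_2=2$ at a corner is at distance $\sqrt2$ from the center. Equality only means $(x-x_0)\cdot\nu(x)=r(\Omega)$ for $\mathcal H^{n-1}$-a.e.\ $x\in\partial\Omega$, i.e.\ the tangent hyperplane at almost every boundary point touches the inball.

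\textbf{Pyramids.} You left this case undone, but it is short. Let $\Pi_h$ be the cone over a fixed base $K\subset\R^{n-1}$ with apex $(a,h)$, where $a$ is interior to $K$ and $\delta=\mathrm{dist}(a,\partial K)$. Then $\abs{\Pi_h}=h\,\mathcal H^{n-1}(K)/n$. Also $P(\Pi_h)\ge 2\mathcal H^{n-1}(K)$, since the lateral surface projects onto $K$. The ball of radius $\rho_h=h/(1+\sqrt{1+h^2/\delta^2})$ centered at $(a,\rho_h)$ lies in $\Pi_h$. Hence $P(\Pi_h)r(\Pi_h)/\abs{\Pi_h}\ge 2n/(1+\sqrt{1+h^2/\delta^2})\to n$.
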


We also recall a lower bound in \eqref{convex_estimates} in terms of $\mathcal{A}(\Omega)$ proved in \cite{AGS}.

\begin{prop}\label{prop:comparisonasymmetries}
     Let $\Omega$ be a non-empty, bounded, open and convex set of $\R^n$. Then, there exists a positive constant $K=K(n)$ depending only on the dimension of the space, such that
     \begin{equation}\label{eq:PRM-WD}
         \mathcal{R}(\Omega)\ge K(n)\mathcal{A}(\Omega)
     \end{equation}
The exponent of the quantity $\mathcal{A}(\Omega)$ is sharp. A reverse inequality cannot be true, since there are sequences of thinning domains for which the functional $\mathcal{R}(\Omega)$ is not converging to zero (for instance a sequence of thinning triangles in dimension 2).
\end{prop}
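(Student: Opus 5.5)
The plan is to write the deficit $P(\Omega)r(\Omega)-|\Omega|$ as an integral over the inner parallel sets and bound it from below with the perimeter estimate \eqref{perimeter_estimate}. Throughout, $r=r(\Omega)$ and $P=P(\Omega)$. By the coarea identity $|\Omega|=\mu(0)=\int_0^{r}P(t)\,dt$ of Subsection \ref{innere},
\[
P\,r-|\Omega|=\int_0^{r}\big(P-P(t)\big)\,dt .
\]
Applying \eqref{perimeter_estimate} inside the integral gives
\[
P\,r-|\Omega|\ge \frac{c_n}{P^{\frac1{n-1}}}\int_0^{r}\big(|\Omega|-\mu(t)\big)\,dt .
\]

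Next I bound $|\Omega|-\mu(t)$ from below by a power of $t$ times $P$.
\begin{itemize}
\item Since $P(\cdot)$ is decreasing, $|\Omega|-\mu(t)=\int_0^t P(s)\,ds\ge t\,P(t)$.
\item The map $t\mapsto P(t)^{\frac1{n-1}}$ is concave on $[0,r]$ and nonnegative at $t=r$. Hence it lies above the chord joining $(0,P^{\frac1{n-1}})$ and $(r,0)$, which gives $P(t)\ge P\,(1-t/r)^{n-1}$.
\end{itemize}
Combining the two bounds,
\[
P\,r-|\Omega|\ge c_n P^{\frac{n-2}{n-1}}\int_0^{r}t\Big(1-\frac tr\Big)^{n-1}dt=\tilde c_n\, r^2 P^{\frac{n-2}{n-1}},
\]
where $\tilde c_n>0$ comes from the Beta integral.

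Dividing by $|\Omega|$ and using $|\Omega|<P\,r$ from Proposition \ref{prop:PRM-1},
\[
\mathcal R(\Omega)\ge \tilde c_n\,\frac{r}{P^{\frac1{n-1}}}.
\]
To reach $\mathcal{A}(\Omega)$ I use two earlier facts. First, \eqref{eq:perdiam} gives $P^{\frac1{n-1}}\le (n\omega_n)^{\frac1{n-1}}\diam(\Omega)/2$. Second, Proposition \ref{prop:r>w} gives $r\ge c(n)\,w_\Omega$. Together these yield $\mathcal R(\Omega)\ge K(n)\,w_\Omega/\diam(\Omega)$.

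The step I expect to be the main obstacle is the lower bound on $|\Omega|-\mu(t)$. A naive bound loses the factor $P$, and then the powers of $P$ no longer cancel into $P^{\frac{n-2}{n-1}}$. The concavity of $P(t)^{\frac1{n-1}}$ is exactly what prevents this loss.

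For sharpness of the exponent, I would test thin rectangles $(0,\ell)\times(0,1)$ in the plane, and thin cylinders in general. In the planar case $\mathcal R=\frac{(2+2\ell)\,\ell/2}{\ell}-1=\ell$, while $\mathcal A\sim\ell$, so no larger power of $\mathcal{A}(\Omega)$ can appear on the right. For the failure of a reverse inequality, I would take thinning isosceles triangles with fixed base and height tending to $0$. These have $\mathcal{A}(\Omega)\to0$. Since $|\Omega|=P\,r/2$ for every triangle, $\mathcal R=1$ along this sequence, so $\mathcal{R}(\Omega)$ does not tend to $0$.
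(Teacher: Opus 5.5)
Your argument is correct. The paper gives no proof of this proposition and simply quotes it from \cite{AGS}, so there is no in-paper proof to match. What you have done is derive it from the tools listed in Section~\ref{sec2:preliminaries}. Your route is the same template the paper itself uses in Section~\ref{section_alpha} for Propositions~\ref{prop:quantitative_wd} and~\ref{prop:quantitative_wd2}: use \eqref{perimeter_estimate} to extract a gain of order $r(\Omega)/P^{\frac{1}{n-1}}(\Omega)$, then convert it into $\mathcal{A}(\Omega)$ through \eqref{eq:lowboundinradius} and \eqref{eq:perdiam}.

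Your only additional ingredient is the chord bound $P(t)\ge P(\Omega)\,(1-t/r(\Omega))^{n-1}$, and it is legitimate. A possible downward jump of $P$ at $t=r(\Omega)$, where $\Omega_{r(\Omega)}=\emptyset$, does not affect it, since you only use $P(r(\Omega))\ge 0$. The Beta integral gives $\tilde c_n=c_n/(n(n+1))$. Hence you obtain the explicit constant $K(n)=\frac{2c_n\,c(n)}{n(n+1)(n\omega_n)^{1/(n-1)}}$, where $c(n)$ is the constant in \eqref{eq:lowboundinradius}. Your proof is as self-contained as \eqref{perimeter_estimate}, which is itself imported from \cite{AGS}.

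There is one slip in the sharpness paragraph. Since $\mathcal{A}(\Omega)\le 1$, any power of $\mathcal{A}(\Omega)$ larger than $1$ holds trivially. What your rectangles (or thin cylinders, where also $\mathcal{R}\sim\mathcal{A}\sim\ell$) show is that the exponent cannot be \emph{lowered}. That is, $\mathcal{R}(\Omega)\ge K\,\mathcal{A}(\Omega)^{\alpha}$ fails for every $\alpha<1$, because $\mathcal{R}/\mathcal{A}^{\alpha}\sim \ell^{1-\alpha}\to 0$. So ``no larger power'' should read ``no smaller power''. Your triangle example is correct: $\mathcal{R}\equiv 1$ because $|\Omega|=P(\Omega)r(\Omega)/2$, while $\mathcal{A}\to 0$, which rules out any reverse inequality.
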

Another useful result contained in \cite{AGS} is the following.
\begin{lemma}\label{lemma:M(PM)eP(MP)}
Let $\Omega$ be a non-empty, bounded, open and convex set of $\R^n$. Then,
    \begin{equation}
        \label{eq:lemmaM(mp)}\mu\left(\frac{\abs{\Omega}}{P(\Omega)}\right)\ge q_1(n,\Omega) \abs{\Omega},
    \end{equation}
\begin{equation}\label{eq:LemmaP(MP)}
        P\left(\frac{\abs{\Omega}}{P(\Omega)}\right)\le q_{2}(n,\Omega)P(\Omega),
    \end{equation}
    where
    \begin{equation}
        q_1(n,\Omega)= \frac{\mathcal{R}(\Omega)}{6n}
        \quad \text{ and }\quad q_{2}(n,\Omega)= \left(1+\frac{\mathcal{R}(\Omega)}{n}\right)^{-1}.
    \end{equation}
  
\end{lemma}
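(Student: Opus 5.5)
Throughout, set $r=r(\Omega)$ and $s_0=\abs{\Omega}/P(\Omega)$. The definition of $\mathcal{R}(\Omega)$ gives $s_0=r/(1+\mathcal{R}(\Omega))$, so $0<s_0<r$ by Proposition \ref{prop:PRM-1}, and
\[
r-s_0=\frac{r\,\mathcal{R}(\Omega)}{1+\mathcal{R}(\Omega)}.
\]
The only tools I plan to use are the coarea identity $\mu(t)=\int_t^r P(s)\,ds$, the monotonicity of $P(t)$, and the concavity of $g(t):=P(t)^{\frac{1}{n-1}}$ on $[0,r]$ together with $g(r)\ge 0$. Both estimates in the lemma come from comparing $g$ with suitable linear functions on the intervals $[0,s_0]$ and $[s_0,r]$.

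\emph{Estimate \eqref{eq:LemmaP(MP)}.} On $[0,s_0]$ we have $P(t)\ge P(s_0)$, since $P$ is decreasing. On $[s_0,r]$, concavity of $g$ and $g(r)\ge0$ give
\[
g(t)\ge g(s_0)\,\frac{r-t}{r-s_0}.
\]
Inserting both bounds into $\abs{\Omega}=\int_0^r P$ yields
\[
\abs{\Omega}\ge P(s_0)\Big(s_0+\frac{r-s_0}{n}\Big).
\]
Dividing by $s_0$ and using $r/s_0=1+\mathcal{R}(\Omega)$ gives exactly
\[
P(s_0)\le \frac{P(\Omega)}{1+\mathcal{R}(\Omega)/n},
\]
which is \eqref{eq:LemmaP(MP)}. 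This part is routine.

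\emph{Estimate \eqref{eq:lemmaM(mp)}.} Using the same lower bound for $g$ on $[s_0,r]$, I get
\[
\mu(s_0)=\int_{s_0}^r P\ge \frac{P(s_0)(r-s_0)}{n}.
\]
Everything then reduces to a lower bound on $P(s_0)$ that does not degenerate as $\mathcal{R}(\Omega)\to0$. I would split into two cases.

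If $P(s_0)\ge P(\Omega)/2$, then
\[
\mu(s_0)\ge \frac{P(\Omega)\, r\,\mathcal{R}(\Omega)}{2n(1+\mathcal{R}(\Omega))}=\frac{(1+\mathcal{R}(\Omega))\abs{\Omega}\,\mathcal{R}(\Omega)}{2n(1+\mathcal{R}(\Omega))}=\frac{\mathcal{R}(\Omega)}{2n}\abs{\Omega},$$
which is better than required.

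If $P(s_0)<P(\Omega)/2$, the naive bound $g(s_0)\ge(1-s_0/r)g(0)$ only gives $\mu(s_0)\gtrsim \mathcal{R}(\Omega)^n\abs{\Omega}$. So the aim here is to show that $\mathcal{R}(\Omega)$ is then bounded below by a dimensional constant, after which $\mathcal{R}^n\ge c\,\mathcal{R}$ holds (recall $\mathcal{R}\le n$), and tuning constants should produce the $1/(6n)$. I would first try to combine the sharp drop of $P$ before $s_0$ with the upper bound \eqref{measure_estimate1} and with \eqref{perimeter_estimate}. Concretely, on $[0,s_0]$ concavity forces
\[
\int_0^{s_0}P\le s_0P(\Omega)-c\,s_0P(\Omega),
\]
and this deficit must be compensated by $\mu(s_0)=s_0P(\Omega)-\int_0^{s_0}P$. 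That compensation is exactly a linear lower bound on $\mu(s_0)$ in terms of $\abs{\Omega}$, and hence of $\mathcal{R}(\Omega)\abs{\Omega}$ since $\mathcal{R}\le n$.

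The main obstacle is this second case. Concavity of $g$ alone does not prevent $P$ from staying near $P(\Omega)$ until just before $s_0$ and then dropping. The deficit argument must therefore use the quantitative perimeter decay \eqref{perimeter_estimate}, or the convexity of $\mu$ with $\mu'(0)=-P(\Omega)$, to turn the drop into mass remaining beyond $s_0$.
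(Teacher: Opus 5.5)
The paper does not prove this lemma; it is quoted from \cite{AGS}, so your argument is judged on its own. Your proof of \eqref{eq:LemmaP(MP)} is correct, and so is Case~1 of \eqref{eq:lemmaM(mp)}. The gap is Case~2: you leave it open, and both routes you suggest for it fail.

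First, $\mathcal{R}(\Omega)$ is \emph{not} bounded below when $P(s_0)<P(\Omega)/2$, at least for $n\ge 3$. Take $\Omega=K'+\rho B_1$, where $K'$ is a thin polytope circumscribed about a ball of radius $\varepsilon$, so that $K'_s=(1-s/\varepsilon)K'$. Let $\varepsilon\ll\rho$, and then take the diameter of $K'$ large enough. Then $P(t)\approx P(K')$ on $[0,\rho]$ and $P(t)=P(K')\big((r-t)/\varepsilon\big)^{n-1}$ on $[\rho,r]$, with $r=\rho+\varepsilon$ and $s_0\approx\rho+\varepsilon/n$. Hence $P(s_0)/P(\Omega)\approx\big(\frac{n-1}{n}\big)^{n-1}<\frac12$, while $\mathcal{R}(\Omega)\approx\frac{(n-1)\varepsilon}{n\rho}\to0$.

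Second, in the same example $\mu(s_0)=\int_0^{s_0}(P(\Omega)-P)\,dt\approx\frac{\varepsilon}{n}\big(\frac{n-1}{n}\big)^{n}P(K')\ll\abs{\Omega}$. So there is no deficit of order $c\,s_0P(\Omega)=c\abs{\Omega}$. The deficit you actually need is only of order $(r-s_0)P(\Omega)=\mathcal{R}(\Omega)\abs{\Omega}$. Contrary to your closing remark, concavity of $g$ alone provides it; \eqref{perimeter_estimate} is not needed.

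The missing idea is to use concavity of $g$ also to the \emph{left} of $s_0$. For $0\le t<s_0<t'<r$, concavity and $g(t')\ge 0$ give $g(t)\le g(s_0)\frac{t'-t}{t'-s_0}$. Letting $t'\to r$,
\[
P(t)\le P(s_0)\Big(1+\frac{s_0-t}{r-s_0}\Big)^{n-1},\qquad 0\le t\le s_0 .
\]
So a drop of $P$ cannot be concentrated just before $s_0$; it persists on a window of length comparable to $(r-s_0)/(n-1)$.

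Now split at $P(\Omega)/6$ instead of $P(\Omega)/2$.
\begin{itemize}
\item If $P(s_0)\ge P(\Omega)/6$, your bound $\mu(s_0)\ge P(s_0)(r-s_0)/n$ together with $P(\Omega)(r-s_0)=\mathcal{R}(\Omega)\abs{\Omega}$ gives exactly $\mu(s_0)\ge q_1(n,\Omega)\abs{\Omega}$.
\item If $P(s_0)<P(\Omega)/6$, set $\delta=3^{1/(n-1)}-1\ge \ln 3/(n-1)$. The display gives $P(t)\le 3P(s_0)<P(\Omega)/2$ for $t\in[s_0-\delta(r-s_0),s_0]\cap[0,s_0]$. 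Since $P(\Omega)-P\ge 0$ on $[0,s_0]$,
\[
\mu(s_0)=\int_0^{s_0}\big(P(\Omega)-P(t)\big)\,dt\ge\frac{P(\Omega)}{2}\min\{s_0,\delta(r-s_0)\}\ge\min\Big\{\frac12,\frac{\ln 3}{2(n-1)}\,\mathcal{R}(\Omega)\Big\}\abs{\Omega}.
\]
This is at least $\frac{\mathcal{R}(\Omega)}{6n}\abs{\Omega}$, because $\mathcal{R}(\Omega)\le n-1$ by Proposition~\ref{prop:PRM-1}.
\end{itemize}
This closes the proof with the stated constant.
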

\subsection{A one dimensional Laplacian eigenvalue problem}\label{1D_subsec}
In this Section, we study a one-dimensional problem that will be useful to estimate $\lambda_\beta(\Omega)$. We refer to \cite{DP2024, DPP, sperb} for further details. 

We consider the eigenvalue problem in the unknown $X=X(s)$:
\begin{equation}
\label{1dim}
    \begin{cases}
    X''+\nu X=0 \quad\text{in}\ (0,s_0),\\
    X'(0)=0, \\
    X'(s_{0})+\beta X(s_{0})=0, 
    \end{cases}
\end{equation}
where $s_{0}$ is a given positive number.

\begin{teorema}\label{mu_thm}
Let $\beta\ge 0$. Then there exists the smallest  eigenvalue $\nu$ of  \eqref{1dim}, which has the following variational characterization:
\begin{equation*}
\label{def_eigenvalue_mu}
\nu_{1}(\beta,s_0)=\inf_{\substack{v\in W^{1,2}(0,s_{0}) \\ v'(0)=0}} \frac{\int_{0}^{s_{0}}\left|v'(s)\right|^{2}ds+\beta v(s_{0})^{2}}{\int_{0}^{s_{0}}\left|v(s)\right|^{2}ds}.
\end{equation*}
 Moreover, the corresponding eigenfunctions are unique up to a multiplicative constant and have constant sign. The first eigenvalue $\nu_{1}(\beta,s_0)$ is positive. Moreover, the first eigenfunction is
\begin{equation*}
X(s)=\cos\left(\sqrt{\nu_{1}(\beta,s_0)}s \right),\quad s\in (0,s_{0}),
\end{equation*}
and the eigenvalue $\nu_{1}(\beta,s_0)$ is the first positive value that satisfies
\begin{equation}
\label{et}
{\nu}=\frac{\beta^{2}}{\tan^{2}\left(\sqrt{\nu} s_{0} \right)}.
\end{equation}

\end{teorema}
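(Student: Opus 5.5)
This is a classical one-dimensional Sturm–Liouville statement, and I will prove it by combining the direct method of the calculus of variations with an explicit solution of the ODE. Throughout I work in $H:=W^{1,2}(0,s_0)$. The condition $v'(0)=0$ in the infimum is not meaningful for a general $W^{1,2}$ function. I will read it as a natural boundary condition: take the infimum of the Rayleigh quotient
\[
R(v)=\frac{\int_0^{s_0}|v'|^2\,ds+\beta v(s_0)^2}{\int_0^{s_0}v^2\,ds}
\]
over all of $H\setminus\{0\}$, and recover $X'(0)=0$ from the Euler–Lagrange equation.

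\textbf{Existence of a minimizer.} Take a minimizing sequence normalized by $\|v_k\|_{L^2}=1$. Since $\beta\ge 0$, the numerator controls $\|v_k'\|_{L^2}$, so $(v_k)$ is bounded in $H$. By the compact embedding $H\hookrightarrow C([0,s_0])$ I can extract a subsequence converging weakly in $H$ and uniformly. The limit $v$ then has unit $L^2$ norm, satisfies $v_k(s_0)\to v(s_0)$, and by weak lower semicontinuity of the Dirichlet integral it is a minimizer. Replacing $v$ by $|v|$ leaves $R$ unchanged, so I may take $v\ge 0$.

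\textbf{Euler–Lagrange equation.} Taking first variations against all $\varphi\in H$ gives
\[
\int_0^{s_0} v'\varphi'\,ds+\beta v(s_0)\varphi(s_0)=\nu_1\int_0^{s_0} v\varphi\,ds .
\]
Choosing $\varphi$ with compact support shows that $v''=-\nu_1 v$ weakly, so $v$ is smooth. Integrating by parts and using arbitrary endpoint values of $\varphi$ then yields $v'(0)=0$ and $v'(s_0)+\beta v(s_0)=0$. Hence $\nu_1$ is an eigenvalue of \eqref{1dim}. Conversely, multiplying \eqref{1dim} by $X$ and integrating shows that every eigenvalue equals $R(X)\ge\nu_1$, so $\nu_1$ is indeed the smallest eigenvalue.

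\textbf{Positivity and the explicit eigenfunction.} We have $\nu_1\ge 0$. If $\nu_1=0$, then $v'\equiv 0$ and $\beta v(s_0)^2=0$, which forces $v=0$ when $\beta>0$. For $\beta=0$ the eigenvalue is in fact $0$ (constants), so the claim $\nu_1>0$ requires $\beta>0$; I will note this caveat. For $\nu>0$, the solutions of $X''+\nu X=0$ with $X'(0)=0$ form the one-dimensional space spanned by $\cos(\sqrt{\nu}\,s)$, which gives uniqueness up to a multiplicative constant. The Robin condition at $s_0$ becomes
\[
-\sqrt{\nu}\sin(\sqrt{\nu}s_0)+\beta\cos(\sqrt{\nu}s_0)=0,
\qquad\text{i.e.}\qquad
\sqrt{\nu}\tan(\sqrt{\nu}s_0)=\beta .
\]

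\textbf{Main obstacle: matching the minimal root with \eqref{et}.} Squaring the relation above gives $\nu\tan^2(\sqrt{\nu}s_0)=\beta^2$. The statement writes \eqref{et} as $\nu=\beta^2/\tan^2(\sqrt\nu s_0)$; I would check this against the source. In either form, squaring introduces spurious roots, so I must identify which root is $\nu_1$. For the minimizer, $v\ge 0$ together with the strong maximum principle (or the ODE uniqueness argument) shows that $\cos(\sqrt{\nu_1}s)>0$ on $[0,s_0]$, hence $\sqrt{\nu_1}s_0<\pi/2$. On the interval $(0,\pi/2)$ the function $x\mapsto \tfrac{x}{s_0}\tan x$ is strictly increasing from $0$ to $+\infty$. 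The equation therefore has exactly one root there, and it is the first positive solution of the equation for $\nu$. This also confirms that the first eigenfunction has constant sign, since any eigenfunction with a larger root changes sign.
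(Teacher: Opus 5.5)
The paper gives no proof of this statement. It is quoted in the preliminaries as a known fact, with a pointer to \cite{DP2024, DPP, sperb}, so there is no argument to compare yours against. Your proof is correct and self-contained: the direct method in $W^{1,2}(0,s_0)$, the Euler--Lagrange equation with natural boundary conditions, explicit solution of the ODE, and selection of the root in $(0,\pi/2)$ via the positivity of the minimizer.

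Three remarks.

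\textbf{The case $\beta=0$.} Your caveat is justified and is a genuine correction to the statement as written. For $\beta=0$ the problem is the Neumann one, so $\nu_1=0$ with constant eigenfunctions. Moreover, \eqref{et} reduces to $\nu=0$ and has no meaningful positive root. The statement therefore needs $\beta>0$, which is the only case the paper actually uses.

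\textbf{The form of \eqref{et}.} Your hesitation here is unnecessary. The equation $\nu=\beta^2/\tan^2(\sqrt{\nu}s_0)$ is exactly $\nu\tan^2(\sqrt{\nu}s_0)=\beta^2$ wherever $\tan(\sqrt{\nu}s_0)\neq 0$, i.e.\ the square of your relation $\sqrt{\nu}\tan(\sqrt{\nu}s_0)=\beta$, so there is no discrepancy with the source. The only issue is the spurious roots with $\sqrt{\nu}\tan(\sqrt{\nu}s_0)=-\beta$. Your argument disposes of them correctly: the minimizer is a positive multiple of $\cos(\sqrt{\nu_1}s)$ on $[0,s_0]$, which forces $\sqrt{\nu_1}s_0<\pi/2$ (equality is excluded by the Robin condition). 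On that interval $\tan>0$ and $x\tan x$ increases strictly from $0$ to $+\infty$, so the root there is unique and is the smallest positive root of \eqref{et}.

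\textbf{The constraint $v'(0)=0$.} Reading it as a natural boundary condition loses nothing. The unconstrained minimizer $\cos(\sqrt{\nu_1}s)$ is smooth and satisfies $v'(0)=0$. Hence any reasonable constrained class containing it, for instance $H^2$ functions with $v'(0)=0$, gives the same infimum $\nu_1$.
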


\begin{oss}\label{argument}
It is possible to prove that $X$ is decreasing when $\beta>0$, so $0\leq X\leq 1$. Hence $0\leq \arccos\left(X(s)\right)\leq \frac{\pi}{2}$. Therefore, we have $0\leq \sqrt{\nu_1} s \leq\frac{\pi}{2}$, and then
\[
\nu_{1}(\beta,s_0) \leq \left(\frac{\pi}{2s_{0}}\right)^{2}.
\] 

It is possible to obtain sharper bounds if one consider the following elementary bounds on the $\tan(x)$

$$\frac{2x}{\frac{\pi^2}{4}-x^2}\le \tan(x)\le \frac{\pi^2}{4}\frac{x}{\frac{\pi^2}{4}-x^2}.$$
Indeed, one obtain

\begin{equation}
    \label{bound:mu:s0}
    \frac{\pi^2}{4s_0^2}\frac{1}{1+\frac{\pi^2}{4\beta s_0}}\le \nu_{1}(\beta, s_0)\le \frac{\pi^2}{4s_0^2}\frac{1}{1+ \frac{2}{\beta s_0}}.
\end{equation}

\end{oss}

\section{Upper bound for the Robin Torsion} Concerning the upper bound in Theorem \ref{thm:TlessD}, we prove an estimate of the Robin torsion in terms of the $L^1$ and $L^2$-norms of the distance function from the boundary. 

\begin{proof}[Proof of Theorem \ref{thm:TlessD}]
   We will give the proof in the case of $\Omega$ being a convex polytope, which is defined as the convex hull of finitely many points in $\mathbb R^n$. The proof for a general open, bounded convex set will follow by approximation arguments (see \cite[Theorem $1.8.16$]{schneider}). Let us assume that the polytope $\Omega$ has $N\in\mathbb N$ facets, denoted by $F_i$, $i=1,..., N$, and let us decompose
   \begin{equation*}
      \Omega= \bigcup_{i=1}^N E_i,
   \end{equation*}
   where $E_i$ is a connected component of $\Omega$, where the map $x\to d(x,\partial\Omega)$ is differentiable.\\
   If $v$ is the Robin torsion function in $\Omega$, then denoting by $\nabla_{x'}v= (\partial_1v,...,\partial_{n-1}v)$, where  $x'=(x_1,...,x_{n-1})$ and by $\partial_kv= \frac{\partial v}{\partial x_k}$, $k=1,...,n$, by Cauchy's inequality, we obtain
   \begin{equation*}
   \begin{split}
       T_\beta(\Omega) &=\frac{\left(\displaystyle{\sum_{i=1}^N\int_{E_i} v \, dx }\right)^2}{\displaystyle{\sum_{i=1}^N\bigg[\int_{E_i}(\abs{\nabla_{x'} v}^2+(\partial_nv)^2) \, dx }+\beta \int_{F_i}v^2\,d\mathcal{H}^{n-1}\bigg]}\\
       &\le \sum_{i=1}^N\frac{\displaystyle{\left(\int_{E_i} v \, dx \right)^2}}{\displaystyle{\int_{E_i}(\abs{\nabla_{x'} v}^2+(\partial_nv)^2) \, dx }+\beta \int_{F_i}v^2\,d\mathcal{H}^{n-1}}.  
   \end{split}
   \end{equation*}
   Without loss of generality, we can assume that $F_i$ lies on the $\mathbb R^{n-1}$ hyperplane, so that $E_i$ can be espressed as follows
   \begin{equation*}
       E_i = \{(x',y): x'\in F_i, 0<y<f_i(x')\},\qquad i=1,...,N,
   \end{equation*}
   where $f_i:x'\in F_i\to \mathbb R$ parametrizes $\partial E_i \cap \Omega$. Clearly $F_i = \overline{E_i} \cap \{y=0\}$. In this way
   \begin{equation}\label{eq:TE_i}
   \begin{split}
       T_{\beta, i} :&= \frac{\displaystyle{\left(\int_{E_i} v \, dx \right)^2}}{\displaystyle{\int_{E_i}(\abs{\nabla_{x'} v}^2+(\partial_nv)^2) \, dx }+\beta \int_{F_i}v^2\,dx'}\\
       &\le \frac{\displaystyle{\left(\int_{E_i} v \, dx \right)^2}}{\displaystyle{\int_{E_i}(\partial_nv)^2 \, dx }+\beta \int_{F_i}v^2\,dx'}=\frac{\displaystyle{\left(\int_{F_i}dx'\int_0^{f_i(x')} v \, dx_n \right)^2}}{\displaystyle{\int_{F_i}\bigg[\int_0^{f_i(x')}(\partial_nv)^2 \, dx_n+\beta v^2\bigg]dx' }}.\\
       &\le \int_{F_i}\frac{\displaystyle{\left(\int_0^{f_i(x')} v \, dx_n \right)^2}}{\displaystyle{\int_0^{f_i(x')}(\partial_nv)^2 \, dx_n+\beta v^2 }}\,dx',
       \end{split}
   \end{equation} 
   where, in the last line,  we have used again the Cauchy-Schwarz inequality.
   The term in the last integral is such that
   \begin{equation}
       \frac{\displaystyle{\left(\int_0^{f_i(x')} v \, dx_n \right)^2}}{\displaystyle{\int_0^{f_i(x')}(\partial_nv)^2 \, dx_n+\beta v^2 }}\le T_\beta([0,s]),
   \end{equation}
   with $s=f_i(x')$ and $T_\beta([0,s])$ is the 1D Robin Torsion in the segment $[0,s]$, that is to say
   
   \begin{equation*}
       T_\beta([0,s])=\sup_{g\in H^1([0,s])}\frac{\displaystyle{\bigg(\int_0^sg(t)\,dt\bigg)^2}}{\displaystyle{\int_0^s(g'(t))^2\,dt+\beta g^2(0)}}.
   \end{equation*}
   The Robin Torsion is the $L^1$-norm of the unique solution to the following ODE
   \begin{equation}\label{eq:RobinTorsion1D}
       \begin{cases}
           -\varphi''(t)=1 & \text{in}\; [0,s]\\
           \varphi'(s)=0\\
           \varphi'(0)=\beta \varphi(0).
       \end{cases}
   \end{equation}
   Integrating \eqref{eq:RobinTorsion1D}$_1$, we get
   \begin{equation*}
       \varphi(t)=-\frac{t^2}{2}+c_1t+c_2,
   \end{equation*}
   and imposing the boundary conditions we arrive to
   \begin{equation*}
       \varphi(t)=-\frac{t^2}{2}+st+\frac{s}{\beta}.
   \end{equation*}
   Therefore
   \begin{equation*}
       T_\beta([0,s])= \int_0^s\varphi(t)\,dt = -\frac{s^3}{6}+\frac{s^3}{2}+\frac{s^2}{\beta}=\frac{s^3}{3}+\frac{s^2}{\beta}.
   \end{equation*}
   Combining this with \eqref{eq:TE_i}, we get
   \begin{equation*}
   \begin{split}
        T_{\beta, i} &\le \int_{F_i}\bigg[\frac{f_i(x')^3}{3}+\frac{f_i(x')^2}{\beta}\bigg]\,dx'\\
       &= \int_{F_i}\int_0^{f_i(x')}\bigg(y^2+2\frac{y}{\beta}\bigg)\,dydx'= \int_{E_i}d(x,\partial \Omega)^2\,dx+\frac{2}{\beta}\int_{E_i}d(x,\partial \Omega)\,dx.    
   \end{split} 
   \end{equation*}
   Eventually,
   \begin{equation*}
       T_\beta(\Omega) \le \sum_{i=1}^NT_{\beta,i}\le\int_{\Omega}d(x,\partial \Omega)^2\,dx+\frac{2}{\beta}\int_{\Omega}d(x,\partial \Omega)\,dx.\\
   \end{equation*}
   
\end{proof}

From Theorem \ref{thm:TlessD}, we obtain a generalization of the Makai inequality for the Robin Torsion as follows:

\begin{proof}[Proof of Corollary \ref{thm:geometricmakai}]
    From the proof of Theorem \ref{thm:TlessD} we know that
    \begin{equation*}
        T_{\beta,i}\le \int_{F_i}\bigg[\frac{f_i(x')^3}{3}+\frac{f_i(x')^2}{\beta}\bigg]\,dx'\le \frac{r(\Omega)^2}{3}\int_{F_i}f_i(x')\,dx'+\frac{r(\Omega)}{\beta}\int_{F_i}f_i(x')\,dx'.
    \end{equation*}
    But recalling that
    \begin{equation*}
        \int_{F_i}f_i(x')\,dx'= \int_{F_i}\int_0^{f_i(x')}\,dy\,dx'= \int_{E_i}\,dx = \abs{E_i},
    \end{equation*}
   we have
    \begin{equation*}
        T_{\beta,i}\le \frac{r(\Omega)^2\abs{E_i}}{3}+\frac{r(\Omega)\abs{E_i}}{\beta}.
    \end{equation*}
    Summing up for $i=1,...,n$ and dividing by $r(\Omega)^2\abs{\Omega}$ we get the thesis.
\end{proof}

\subsection{On the optimal sets}\label{subsec:Optimal}
In this subsection we discuss about the optimal sequences of sets that achieves the equality case in inequalities \eqref{lower:torsion:robin}, \eqref{eq:torsiondistancefunction} and \eqref{eq:qualitativemakai}.Let us stress that inequality \eqref{eq:qualitativemakai} is a direct consequence of inequality \eqref{eq:torsiondistancefunction}, meaning that they share the same optimal sequences.\\
Inequalities \eqref{lower:torsion:robin} and \eqref{eq:qualitativemakai} are both optimal on slabs, and one can use either inequalities to prove the sharpness of the other.
 Indeed,  if one wants to prove that \eqref{eq:qualitativemakai} is sharp along a sequence of slab-like domain, we can compose \eqref{eq:qualitativemakai} with \eqref{lower:torsion:robin} 
\begin{equation}\label{eq:optimal1}
\frac{1}{3}+\frac{1}{r(\Omega_\ell)\beta}\ge \frac{T_{\beta}(\Omega_\ell)}{r(\Omega_\ell)^2\abs{\Omega_\ell}}\ge \left(\frac{1}{3}+\frac{1}{r(\Omega_\ell)\beta}\right) \frac{\abs{\Omega_\ell}^2}{P^2(\Omega_\ell)r^2(\Omega_\ell)}.
\end{equation}
and if we send $\ell\to 0$, by Proposition \ref{prop:PRM-1} we have $\frac{\abs{\Omega_\ell}^2}{P^2(\Omega_\ell)r^2(\Omega_\ell)}\to 1$, proving the sharpness.

Analogously, if we want to prove that \eqref{lower:torsion:robin} is sharp on a family of slab-like domains, we can use  \eqref{eq:qualitativemakai}, obtaining
\begin{equation}\label{eq:optimal2}
\frac{1}{3}+\frac{1}{r(\Omega_\ell)\beta}\le \frac{T_{\beta}(\Omega_\ell)P^2(\Omega_\ell)}{\abs{\Omega_\ell}^3}\le \left(\frac{1}{3}+\frac{1}{r(\Omega_\ell)\beta}\right) \frac{P^2(\Omega_\ell)r(\Omega_\ell)^2}{\abs{\Omega_\ell}^2}.    
\end{equation}

Instead, if we consider sequences of thinning cylinders, it is easily seen that
\begin{equation*}
    \lim_{\ell\to +\infty} \frac{T_{\beta}(\Omega_\ell)}{r(\Omega_\ell)^2\abs{\Omega_\ell}}=\lim_{\ell\to +\infty}\frac{T_{\beta}(\Omega_\ell)P^2(\Omega_\ell)}{\abs{\Omega_\ell}^3}= +\infty.
\end{equation*}
While \eqref{eq:optimal1} naturally highlights slab-like domains as optimal candidates, it seems that it does not immediately reveal the role of thinning cylinders as in the Dirichlet case. This is due to the fact that the Robin Torsion does not scale in the standard way, but it becomes visible once we normalize both functionals by $\frac{1}{3}+\frac{1}{r(\Omega_\ell)\beta}$. In fact we have that
\begin{equation*}
1\ge \frac{T_{\beta}(\Omega_\ell)}{r(\Omega_\ell)^2\abs{\Omega_\ell}}\left(\frac{1}{3}+\frac{1}{r(\Omega_\ell)\beta}\right)^{-1}\ge  \frac{\abs{\Omega_\ell}^2}{P^2(\Omega_\ell)r^2(\Omega_\ell)},
\end{equation*}
and
\begin{equation*}
    1\le \frac{T_{\beta}(\Omega_\ell)P^2(\Omega_\ell)}{\abs{\Omega_\ell}^3}\left(\frac{1}{3}+\frac{1}{r(\Omega_\ell)\beta}\right)^{-1}  \le \frac{P^2(\Omega_\ell)r(\Omega_\ell)^2}{\abs{\Omega_\ell}^2}.
\end{equation*}
The right-hand side of both previous inequalies tends to $1$ as $\ell \to \infty$, implying that thinning cylinders are also optimal sequences.

\section{Quantitative estimates for the Robin Torsion}\label{subsec:webtors}

We can now prove the quantitative improvement of the Makai-type inequality for the Robin Torsion.
\begin{proof}[Proof of Theorem \ref{thm:quantitativemakai}]
   Let us recall the upper bound \eqref{eq:torsiondistancefunction},
   $$ T_\beta(\Omega)\le \int_{\Omega}d(x,\partial \Omega)^2\,dx+\frac{2}{\beta}\int_{\Omega}d(x,\partial \Omega)\,dx,$$
and let us start by considering the first term on the right-hand side. We stress that, for this term, the computations can be found in \cite{AGS} and we rewrite them for the sake of completeness. Applying Coarea Formula, integrating by parts and using estimate \eqref{measure_estimate1}, we get
    \begin{equation}\label{eq:makaieq1}
    \begin{aligned}
        \int_{\Omega}d(x,\partial \Omega)^2\,dx&= \int_0^{r(\Omega)}t^2P(t)\,dt= 2\int_0^{r(\Omega)}t\mu(t)\,dt\\&\le 2\int_0^{r(\Omega)}t(r(\Omega)-t)P(t)\,dt+ \frac{1}{n-1}\int_0^{r(\Omega)}t(r(\Omega)-t)^2P'(t)\,dt.
        \end{aligned}
    \end{equation}
    If we integrate by parts the second integral on the right-hand side of \eqref{eq:makaieq1}, we get
    \begin{equation}\label{eq:makaieq2}
    \begin{split}
\int_0^{r(\Omega)}t(r(\Omega)-t)^2P'(t)\,dt&= t(r(\Omega)-t)^2P(t)\bigg|_0^{r(\Omega)}-\int_0^{r(\Omega)}[(r(\Omega)-t)^2-2t(r(\Omega)-t)]P(t)\,dt\\
&=2\int_0^{r(\Omega)}t(r(\Omega)-t)P(t)\,dt-\int_0^{r(\Omega)}(r(\Omega)-t)^2P(t)\,dt.
    \end{split}
    \end{equation}
    We notice that one of the two integrals in \eqref{eq:makaieq2} is equal to the one in \eqref{eq:makaieq1}, therefore
    \begin{equation*}
        \begin{split}\int_0^{r(\Omega)}t^2P(t)\,dt&\le \frac{2n}{n-1}\int_0^{r(\Omega)}t(r(\Omega)-t)P(t)\,dt-\frac{1}{n-1}\int_0^{r(\Omega)}(r(\Omega)-t)^2P(t)\,dt \\
        &=2\frac{n+1}{n-1}r(\Omega)\int_0^{r(\Omega)}tP(t)\,dt-\frac{2n+1}{n-1}\int_0^{r(\Omega)}t^2P(t)\,dt-\frac{r(\Omega)^2\abs{\Omega}}{n-1}.
        \end{split}
    \end{equation*}
    Summing up the same terms, we have
    \begin{equation}\label{eq:makaieq3}
        \int_0^{r(\Omega)}t^2P(t)\,dt\le \frac{2(n+1)}{3n}r(\Omega) \int_{0}^{r(\Omega)}tP(t)\,dt-\frac{r(\Omega)^2\abs{\Omega}}{3n}.
    \end{equation}
    We now estimate the integral on the right-hand side of \eqref{eq:makaieq3}. Integrating by parts and using again \eqref{eq:dermu} and \eqref{measure_estimate1}
    \begin{equation*}
    \begin{split}
\int_{0}^{r(\Omega)}tP(t)\,dt= \int_{0}^{r(\Omega)}\mu(t)\,dt&\le \int_{0}^{r(\Omega)}(r(\Omega)-t)P(t)\,dt+ \frac{1}{2(n-1)}\int_{0}^{r(\Omega)}(r(\Omega)-t)^2P'(t)\,dt\\
&=\frac{n}{n-1}\int_{0}^{r(\Omega)}(r(\Omega)-t)P(t)\,dt-\frac{r(\Omega)^2P(\Omega)}{2(n-1)}\\
&=\frac{n}{n-1}r(\Omega)\abs{\Omega}-\frac{n}{n-1}\int_{0}^{r(\Omega)}tP(t)\,dt-\frac{r(\Omega)^2P(\Omega)}{2(n-1)}.
\end{split}
    \end{equation*}
    Therefore
    \begin{equation}\label{eq:makaieq4}
        \int_{0}^{r(\Omega)}tP(t)\,dt\le \frac{n}{2n-1}r(\Omega)\abs{\Omega}-\frac{r(\Omega)^2P(\Omega)}{2(2n-1)}
    \end{equation}
    Combining  \eqref{eq:makaieq3} and \eqref{eq:makaieq4}, we get
    \begin{equation*}
       \begin{split}
       \int_0^{r(\Omega)}t^2P(t)\,dt&\le \frac{2(n+1)}{3(2n-1)}r(\Omega)^2\abs{\Omega}-\frac{n+1}{3n(2n-1)} r(\Omega)^3P(\Omega)-\frac{r(\Omega)^2\abs{\Omega}}{3n}\\
       &=\frac{r(\Omega)^2\abs{\Omega}}{3}+\frac{n+1}{3n(2n-1)}\bigg[r(\Omega)^2\abs{\Omega}-r(\Omega)^3P(\Omega)\bigg]\\
       &=\frac{r(\Omega)^2\abs{\Omega}}{3}-\frac{n+1}{3n(2n-1)}r(\Omega)^2\abs{\Omega}\mathcal{R}(\Omega).
\end{split}
\end{equation*}
With regards to the second integral, applying Coarea formula and integrating by parts, we get
\begin{equation*}
    \int_{\Omega}d(x,\partial \Omega)\,dx = \int_0^{r(\Omega)}tP(t)\,dt = \int_0^{r(\Omega)}\mu(t)\,dt.
\end{equation*}
Recalling that $\mu(t)\le P(t)(r(\Omega)-t)$, we have that
\begin{equation*}
     \int_0^{r(\Omega)}tP(t)\,dt=\int_0^{r(\Omega)}\mu(t)\,dt \le r(\Omega)\abs{\Omega}-\int_{0}^{r(\Omega)}tP(t)\,dt,
\end{equation*}
so that
\begin{equation*}
    \int_0^{r(\Omega)}tP(t)\,dt \le\frac{r(\Omega)\abs{\Omega}}{2}.
\end{equation*}
Therefore
\begin{equation*}
    \frac{2}{\beta}\int_{\Omega}d(x,\partial \Omega)\,dx\le\frac{r(\Omega)\abs{\Omega}}{\beta},
    \end{equation*}
and
    \begin{equation*}
        \left(\frac{1}{3}+\frac{1}{r(\Omega)\beta}\right)-\frac{T_\beta(\Omega)}{r(\Omega)^2\abs{\Omega}}\ge \frac{n+1}{3n(2n-1)}\mathcal{R}(\Omega).
    \end{equation*}

    \vspace{2mm}
    Let us now prove the upper bound. If we multiply and divide the functional by $P^2(\Omega)/\abs{\Omega}^2$ and use the lower bound for the P\'olya functional \eqref{lower:torsion:robin}, we get
\begin{equation*}
\begin{split}
\frac{1}{3}+\frac{1}{r(\Omega)\beta}-\frac{T_\beta(\Omega)}{r(\Omega)^2 \abs{\Omega}}&= \frac{1}{3}+\frac{1}{r(\Omega)\beta}-\frac{T_\beta(\Omega)P^2(\Omega)}{ \abs{\Omega}^3}\frac{\abs{\Omega}^2}{r(\Omega)^2 P^2(\Omega)} \\&\le\left(\frac{1}{3}+\frac{1}{r(\Omega)\beta}\right)\bigg(1-\frac{\abs{\Omega}^2}{r(\Omega)^2 P^2(\Omega)}\bigg)\\
    &= \left(\frac{1}{3}+\frac{1}{r(\Omega)\beta}\right)\bigg(1+\frac{\abs{\Omega}}{r(\Omega) P(\Omega)}\bigg)\bigg(1-\frac{\abs{\Omega}}{r(\Omega) P(\Omega)}\bigg)\\&\le 2\left(\frac{1}{3}+\frac{1}{r(\Omega)\beta}\right)\bigg(\frac{P(\Omega)r(\Omega)}{\abs{\Omega}}-1\bigg).
        \end{split}
\end{equation*}
    \end{proof}
We proceed to the proof of Theorem \ref{thm:quantitativepolyatorsion}, starting with the lower bound.

\begin{proof}[Proof of Theorem \ref{thm:quantitativepolyatorsion} ]

We start by proving the lower bound in \eqref{polya:lower:R}.
Let us consider as a test function in problem \eqref{torsion:robin} $f(x)= g(d(x,\partial\Omega))$, where $g$ is suitably chosen. At this point, coarea formula and an integration by parts allow us to write 
$$
\int_\Omega f(x)\, dx = \int_0^{r(\Omega)} g(t) P(t) \, dt=\int_0^{r(\Omega)} g'(t) \mu(t) \, dt+\abs{\Omega}g(0),$$
and
\begin{equation*}
 \beta \int_{\partial\Omega} f^2\, d\mathcal{H}^{n-1}=\beta g^2(0)P({\Omega}), \quad \quad   \int_\Omega \abs{ \nabla f}^2\,dx = \int_0^{r(\Omega)} g'^2(t) P(t) \, dt.
\end{equation*} 
In this way, we get

\begin{equation*}
    \label{torsionpolya}
    T_\beta(\Omega) \geq \frac{\displaystyle{\left(\int_\Omega f(x)\, dx\right)^2}}{\displaystyle{\int_\Omega \abs{ \nabla f}^2\, dx+\beta \int_{\partial\Omega} f^2\, d\mathcal{H}^{n-1}}} =\frac{\displaystyle{\left(\int_0^{r(\Omega)} g'(t) \mu(t) \, dt+\abs{\Omega}g(0)\right)^2}}{\displaystyle{\int_0^{r(\Omega)} g'^2(t) P(t) \, dt}+\beta g^2(0)P(\Omega)},
\end{equation*}
and choosing $g(t) = \displaystyle{\int_0^t\frac{\mu(s)}{P(s)}\, ds+ \frac{\abs{\Omega}}{\beta P(\Omega)}}$, we finally have \begin{equation}\label{eq:lowboundpolya}
T_\beta(\Omega) \geq  \int_0^{r(\Omega)}\frac{\mu^2(t)}{P(t)}\,dt+\frac{\abs{\Omega}^2}{\beta P(\Omega)}.
\end{equation}
To obtain our quantitative result, we must split the integral on the right hand-side of \eqref{eq:lowboundpolya} into the two intervals $[0,\Bar{t}]$ and $[\Bar{t},r(\Omega)]$, where $\Bar{t}=\abs{\Omega}/P(\Omega)<r(\Omega)$. By \eqref{steiner-measure} we get
  \begin{align*}
     & \int_{0}^{r(\Omega)}\dfrac{\mu^2(t)}{P(t)}dt\\
      &\geq \dfrac{1}{P^2(\Omega)}\int_0^{\frac{|\Omega|}{P(\Omega)}}\left(|\Omega|-P(\Omega)t\right)^2 P(\Omega) dt
      +\dfrac{1}{P^2(\Omega)}\int_{\frac{|\Omega|}{P(\Omega)}}^{r(\Omega)} \mu^2(t)(-\mu'(t))dt \\
      & =\dfrac{|\Omega|^3}{3 P^2(\Omega)}+\dfrac{1}{P^2(\Omega)}\dfrac{\mu^3\left(\dfrac{|\Omega|}{P(\Omega)}\right)}{3},
  \end{align*}
  
  Therefore, multiplying by $P^2(\Omega)/\abs{\Omega}^3$, and using the fact that $P(\Omega)/\abs{\Omega}> r(\Omega)^{-1}$, we get
  \begin{equation*}
    \dfrac{T_\beta(\Omega)P^2(\Omega)}{|\Omega|^3}\geq \dfrac{1}{3}+\frac{P(\Omega)}{\beta\abs{\Omega}}+
      \dfrac{1}{3}\dfrac{\mu^3\left(\dfrac{|\Omega|}{P(\Omega)}\right)}{|\Omega|^3}> \dfrac{1}{3}+\frac{1}{r(\Omega)\beta}+
      \dfrac{1}{3}\dfrac{\mu^3\left(\dfrac{|\Omega|}{P(\Omega)}\right)}{|\Omega|^3}.  
  \end{equation*}
  Rearranging the terms and applying Lemma \ref{lemma:M(PM)eP(MP)}, we obtain 
\begin{align*}
      \dfrac{T_\beta(\Omega)P^2(\Omega)}{|\Omega|^3}-\bigg(\dfrac{1}{3}+\frac{1}{r(\Omega)\beta}\bigg)\geq
      \dfrac{1}{3}\dfrac{\mu^3\left(\dfrac{|\Omega|}{P(\Omega)}\right)}{|\Omega|^3}\geq \dfrac{q_1(n,\Omega)^3}{3}=\dfrac{1}{2^3\cdot3^4  n^3}\left(\frac{P(\Omega) r(\Omega)}{\abs{\Omega}}-1\right)^3,
  \end{align*}
   and this proves the lower bound in \eqref{polya:lower:R}.
   Now, regarding the upper bound, we just multiply and divide by $r(\Omega)^2$ and use the Robin Makai inequality \eqref{eq:qualitativemakai}, having
   \begin{equation*}
       \begin{split}
       \dfrac{T_\beta(\Omega)P^2(\Omega)}{|\Omega|^3}&-\bigg(\dfrac{1}{3}+\frac{1}{r(\Omega)\beta}\bigg)=\dfrac{T_\beta(\Omega)}{r(\Omega)^2\abs{\Omega}}\dfrac{P^2(\Omega)r(\Omega)^2}{|\Omega|^2}-\bigg(\dfrac{1}{3}+\frac{1}{r(\Omega)\beta}\bigg)\\
       &\le 
       \bigg(\dfrac{1}{3}+\frac{1}{r(\Omega)\beta}\bigg)\dfrac{P^2(\Omega)r(\Omega)^2}{|\Omega|^2}-\bigg(\dfrac{1}{3}+\frac{1}{r(\Omega)\beta}\bigg)\\
       &=\bigg(\dfrac{1}{3}+\frac{1}{r(\Omega)\beta}\bigg)\bigg[\dfrac{P^2(\Omega)r(\Omega)^2}{|\Omega|^2}-1\bigg]\\
       &=\bigg(\dfrac{1}{3}+\frac{1}{r(\Omega)\beta}\bigg)\bigg(\dfrac{P(\Omega)r(\Omega)}{|\Omega|}+1\bigg)\mathcal{R}(\Omega)\le (n+1)\bigg(\dfrac{1}{3}+\frac{1}{r(\Omega)\beta}\bigg)\mathcal{R}(\Omega).
       \end{split}
   \end{equation*}
   \end{proof}

\section{Quantitative estimate for the  Robin eigenvalue}
In this section, we prove Theorem \ref{thm:eigen_R}, which ensures that the optima in inequality
$$\nu_1(\beta, s_0)\ge \lambda_\beta(\Omega)$$
are the same as in the inequality $\mathcal{R}(\Omega)\ge 0$.
\begin{proof}[Proof of Theorem \ref{thm:eigen_R}]
 The first lines of the proof follow the same argument proposed in \cite{DP2024}, whose computations are analogous to the one shown in Subsection \ref{subsec:webtors}. Let us use as a test function in the variational characterization of $\lambda_\beta(\Omega)$ the function $f(x)=g(t)$, where $g$ depends only on the distance function from the boundary of $\Omega$. Then by coarea formula we get

\begin{equation}
\label{polyalambda}
    \lambda_\beta(\Omega) \leq \dfrac{\displaystyle{\int_0^{r(\Omega)} (g'(t))^2 P(t) \, dt}+\beta g(0)^2P(\Omega)}{\displaystyle{\int_0^{r(\Omega)} g^2(t) P(t) \, dt}}.
\end{equation}
The latest, with the change of  variables $s= \frac{\mu(t)}{P(\Omega)}$, leads to 

\begin{equation*}
    \lambda_\beta(\Omega)\le\frac{\displaystyle{\int_{0}^{\frac{\abs{\Omega}}{P(\Omega)}}h'(s)^2 P(t(s))^2\,ds+\beta P^2(\Omega)}h^2\left(\frac{\abs{\Omega}}{P(\Omega)}\right)}{P^2(\Omega)\displaystyle{\int_0^{\frac{\abs{\Omega}}{P(\Omega)}}h(s)^2\,ds}}
\end{equation*}
where $h(s)=g(t)$. Now, we choose $\Bar{t}= \frac{\abs{\Omega}}{P(\Omega)}$ and we  denote by 
\begin{equation}\label{eq:bars}
    \Bar{s}=\frac{\mu(\Bar{t})}{P(\Omega)}.
    \end{equation}
Hence, we divide the integral at the numerator in \eqref{polyalambda} at $\Bar{s}$, obtaining

\begin{equation*}
\begin{split}
\lambda_{\beta}(\Omega) &\le \frac{\displaystyle \int_{0}^{\Bar{s}}h'(s)^2P(t(s))^2\,ds +\int_{\Bar{s}}^{\frac{\abs{\Omega}}{P(\Omega)}}h'(s)^2P(t(s))^2\,ds+\beta P^2(\Omega)h^2\left(\frac{\abs{\Omega}}{P(\Omega)}\right)}{P^2(\Omega)\displaystyle\int_0^{\frac{\abs{\Omega}}{P(\Omega)}}h(s)^2\,ds}\\
    &\le\frac{\displaystyle P(\Omega)\int_{0}^{\Bar{s}}h'(s)^2P(t(s))\,ds +P(\Omega)^2\int_{\Bar{s}}^{\frac{\abs{\Omega}}{P(\Omega)}}h'(s)^2\,ds+P^2(\Omega)\beta h\left(\frac{\abs{\Omega}}{P(\Omega)}\right)}{P^2(\Omega)\displaystyle\int_0^{\frac{\abs{\Omega}}{P(\Omega)}}h(s)^2\,ds}.
\end{split}
\end{equation*}
Let us observe that if $s\in[0,\overline{s}]$, $P(t(s))\le P(\overline{t})$ thanks to the monotonicity of the perimeter, we can apply Lemma \ref{lemma:M(PM)eP(MP)}, obtaining
\begin{equation}\label{eq:estimateintermediate}
\begin{split}
    \lambda_{\beta}(\Omega) &\le\frac{\displaystyle q_2(n,\Omega)P^2(\Omega)\int_{0}^{\Bar{s}}h'(s)^2\,ds +P^2(\Omega)\int_{\Bar{s}}^{\frac{\abs{\Omega}}{P(\Omega)}}h'(s)^2\,ds+P^2(\Omega)\beta h\left(\frac{\abs{\Omega}}{P(\Omega)}\right)}{P^2(\Omega)\displaystyle\int_0^{\frac{\abs{\Omega}}{P(\Omega)}}h(s)^2\,ds}\\
    &=\frac{\displaystyle (q_2(n,\Omega)-1)\int_{0}^{\Bar{s}}h'(s)^2\,ds +\int_{0}^{\frac{\abs{\Omega}}{P(\Omega)}}h'(s)^2\,ds+\beta h\left(\frac{\abs{\Omega}}{P(\Omega)}\right)}{\displaystyle\int_0^{\frac{\abs{\Omega}}{P(\Omega)}}h(s)^2\,ds}.
\end{split}
\end{equation}
Now, let us denote by
$$s_0=\frac{\abs{\Omega}}{P(\Omega)},$$ and let us choose $h(s)=\cos(\sqrt{\nu_{1}}s)$, for $0\le s\le s_0$, we have
\begin{equation*}
    \int_0^{s_0}h(s)^2\,ds=\frac{\abs{\Omega}}{2P(\Omega)}+\frac{\sin\left(2\sqrt{\nu_{1}}\frac{\abs{\Omega}}{P(\Omega)}\right)}{4\sqrt{\nu_{1}}}\leq \dfrac{\abs{\Omega}}{P(\Omega)}.
\end{equation*}
In this way, we get
\begin{equation*}
   \nu_{1}\left(\beta,s_0\right) -\lambda_\beta(\Omega)\ge(1-q_2(n,\Omega))\frac{P(\Omega)}{\abs{\Omega}}\nu_{1}\left(\beta,s_0 \right)\int_0^{\Bar{s}}\sin^2(\sqrt{\nu_{1}}s)\,ds. 
\end{equation*}
Using the inequality $\sin(r)\ge \frac{2}{\pi}r$, which is valid for every $r\in [0,\pi/2]$, we have 
\begin{equation}\label{eq:estimatebars3}
     \nu_{1}\left(\beta,s_0\right) -\lambda_\beta(\Omega)\ge\frac{4}{3\pi^2}(1-q_2(n,\Omega))\frac{P(\Omega)}{\abs{\Omega}}\nu_{1}^2\Bar{s}^3.
\end{equation}
Recalling \eqref{eq:bars} and Lemma \ref{lemma:M(PM)eP(MP)}, we have that
\begin{equation}\label{eq:estimatebars}
    \Bar{s}^3=\frac{\mu\left(\frac{\abs{\Omega}}{P(\Omega)}\right)^3}{P^3(\Omega)}\ge\frac{\abs{\Omega}^3}{ 6^3 n^3P(\Omega)^3}\left(\frac{P(\Omega)r(\Omega)}{|\Omega|}-1\right)^3.
\end{equation}
Moreover, by the definition of $q_2(n,\Omega)$ and Proposition \eqref{convex_estimates}, we get
\begin{equation}\label{eq:estimateintermediate2}
    1-q_2(n,\Omega)= 1-\frac{1}{1+\frac{1}{n}\left(\frac{P(\Omega)r(\Omega)}{\abs{\Omega}}-1\right)}= \frac{\frac{1}{n}\left(\frac{P(\Omega)r(\Omega)}{\abs{\Omega}}-1\right)}{1+\frac{1}{n}\left(\frac{P(\Omega)r(\Omega)}{\abs{\Omega}}-1\right)}\ge \frac{1}{2n-1}\left(\frac{P(\Omega)r(\Omega)}{\abs{\Omega}}-1\right).
\end{equation}
Combining \eqref{eq:estimatebars3}, \eqref{eq:estimatebars}, and \eqref{eq:estimateintermediate2}, we have
\begin{equation*}
     \nu_{1}\left(\beta,s_0\right) -\lambda_\beta(\Omega)\ge\frac{1}{2\cdot 3^4 \pi (2n-1)n^3}\cdot\frac{\abs{\Omega}^2}{P(\Omega)^2}\nu_{1}^2\left(\frac{P(\Omega)r(\Omega)}{\abs{\Omega}}-1\right)^4.
\end{equation*}

Now let us observe that $\mu$ is the first positive root of
$$\sqrt{\nu_{1}}\tan(\sqrt{\nu_{1}}s_0)=\beta,$$ that combined with the inequality \eqref{bound:mu:s0}
gives
$$\nu_{1}s_0^2\ge\left(\frac{\frac{\pi^2}{4}}{1+\frac{\pi^2P(\Omega)}{4   \abs{\Omega}\beta}}\right)\ge \left(\frac{\frac{\pi^2}{4}}{1+\frac{\pi^2n}{4   \beta r(\Omega)}}\right)  $$
\begin{equation*}
     \frac{ \nu_{1}\left(\beta,s_0\right) \abs{\Omega}^2 }{P^2(\Omega)} -\frac{\lambda_\beta(\Omega)\abs{\Omega}^2}{P^2(\Omega)}\ge\frac{1}{2\cdot 3^4 \pi (2n-1)n^3}\cdot\left(\frac{\frac{\pi^2}{4}}{1+\frac{\pi^2n}{4   \beta r(\Omega)}}\right)  ^2\left(\frac{P(\Omega)r(\Omega)}{\abs{\Omega}}-1\right)^4,
\end{equation*}
which concludes the proof of the lower bound.

\vspace{2mm}
We now  prove the upper bound in \eqref{stima:autoval:R}.
  Let $u_{\Omega}$ be the solution to 
  \begin{equation}
      \begin{cases}
          -\Delta u_{\Omega}=1 & \textrm{in}\;\Omega\\
          u_{\Omega}=0 &\textrm{on}\;\partial \Omega
      \end{cases}
  \end{equation}
  and let $M(\Omega)=\norma{u_{\Omega}}_{\infty}$. In \cite{ilaria}, it was proved that 
  $$2M(\Omega)\geq 3\dfrac{T(\Omega)}{\abs\Omega}\geq \dfrac{\abs{\Omega}^2}{P^2(\Omega)},$$
  hence, if we define $t_0=\sqrt{2M(\Omega)}$, we have $t_0>s_0$. In \cite{DPP}, it was proved that $\lambda_{\beta}(\Omega)\geq \nu_{1}(\beta, t_0)$, hence
  $$0\leq \nu_{1}(\beta,s_0)-\lambda_{\beta}(\Omega)\leq \nu_{1}(\beta,s_0)-\nu_{1}(\beta, t_0),$$ so we aim to prove that the right-hand side is bounded from abvove by a constant times $\mathcal{R}(\Omega)$.\\ 
  We denote $\nu_1(s_0)=\nu_{1}(\beta,s_0)$, $\nu_1(t_0)=\nu_{1}(\beta,t_0)$.
  In the variational definition of $\nu_{1}(\beta,s_0)$, we use as a test function $X=\cos(\sqrt{\nu_{1}(t_0)}s)$, so we obtain
  \[
\nu_{1}(s_0)-\nu_{1}(t_0)\leq \nu_{1}(t_0)\left[ \dfrac{\displaystyle{\int_0^{s_0}\sin^2(\sqrt{\nu_{1}(t_0)}s)ds}}{\displaystyle{\int_0^{s_0}\cos^2(\sqrt{\nu_{1}(t_0)}s)ds}}-1\right]+\dfrac{\displaystyle{\beta\cos(\sqrt{\nu_{1}(t_0)}s_0)}}{\int_0^{s_0}\cos^2(\sqrt{\nu_{1}(t_0)}s)}.
  \]

  We can explicitly compute

  $$\int_0^{s_0}\sin^2(\sqrt{\nu_{1}(t_0)}s)=\dfrac{s_0}{2}-\dfrac{\sin(\sqrt{\nu_{1}(t_0)}s_0)\cos(\sqrt{\nu_{1}(t_0)}s_0)}{2\sqrt{\nu_{1}(t_0)}}$$
  $$\int_0^{s_0}\cos^2(\sqrt{\nu_{1}(t_0)}s)=\dfrac{s_0}{2}+\dfrac{\sin(\sqrt{\nu_{1}(t_0)}s_0)\cos(\sqrt{\nu_{1}(t_0)}s_0)}{2\sqrt{\nu_{1}(t_0)}}$$

  obtaining
 
 \begin{equation}
 \label{stima_mu_t_zero}
  \nu_{1}(s_0)-\nu_{1}(t_0)\leq \dfrac{\cos(\sqrt{\nu_{1}(t_0)}s_0)\left[ \beta\cos(\sqrt{\nu_{1}(t_0)}s_0)-\sqrt{\nu_{1}(t_0)}\sin(\sqrt{\nu_{1}(t_0)}s_0)\right]}{\dfrac{s_0}{2}+\dfrac{\sin(\sqrt{\nu_{1}(t_0)}s_0)\cos(\sqrt{\nu_{1}(t_0)}s_0)}{2\sqrt{\nu_{1}(t_0)}}}.
 \end{equation}
 Let us multiply and divide the numerator in \eqref{stima_mu_t_zero} by $\sqrt{\beta^2+\nu_1(t_0)}$ and let be $\gamma \in \mathbb  R$ such that
$$\sin\gamma=\dfrac{\beta}{\sqrt{\beta^2+\nu_{1}(t_0)}}, \qquad \cos\gamma=\dfrac{\sqrt{\nu_{1}(t_0)}}{\sqrt{\beta^2+\nu_{1}(t_0)}}.$$
In this way, the term in the squared brackets becomes
\begin{equation*}
\begin{split}
    \frac{\beta}{\sqrt{\beta^2+\nu_{1}(t_0)}}&\cos(\sqrt{\nu_{1}(t_0)}s_0)-\sqrt{\dfrac{\nu_{1}(t_0)}{\beta^2+\nu_{1}(t_0)}}\sin(\sqrt{\nu_{1}(t_0)}s_0)\\
    &= \sin\gamma\cos(\sqrt{\nu_{1}(t_0)}s_0)-\cos\gamma \sin(\sqrt{\nu_{1}(t_0)}s_0)= \sin(\gamma -\sqrt{\nu_1(t_0)}s_0).
    \end{split}
\end{equation*}
  In particular, from \eqref{et}, we can choose $\gamma \in [0,\pi/2]$ so that $\gamma=\sqrt{\nu_{1}(t_0)}t_0$. Hence, we can rewrite \eqref{stima_mu_t_zero} as follows
  $$  \nu_{1}(s_0)-\nu_{1}(t_0)\leq \dfrac{\sqrt{\beta^2+\nu_{1}(t_0)}\sin(\sqrt{\nu_{1}(t_0)}(t_0-s_0))}{\dfrac{s_0}{2}+\dfrac{\sin(\sqrt{\nu_{1}(t_0)}s_0)\cos(\sqrt{\nu_{1}(t_0)}s_0)}{2\sqrt{\nu_{1}(t_0)}}}\leq 2\sqrt{\beta^2+\nu_{1}(t_0)}\sqrt{\nu_{1}(t_0)}\dfrac{(t_0-s_0)}{s_0}.$$
  From \cite{DPP}, we know that $t_0\leq r(\Omega)$, hence 
  $$\nu_{1}(s_0)-\nu_{1}(t_0)\leq 2\sqrt{\beta^2+\nu_{1}(t_0)}\sqrt{\nu_{1}(t_0)} \mathcal{R}(\Omega).$$

Let us now multiply by $s_0^2$. Since $t_0>s_0$, then $\nu_{1}(t_0)\le \nu_{1}(s_0)$. Moreover we know that $\sqrt{\nu_{1}(s_0)}s_0\le \pi/2$, therefore
\begin{equation*}
2\sqrt{\beta^2+\nu_{1}(t_0)}\sqrt{\nu_{1}(t_0)}s_0^2\le 2\nu_{1}(s_0)\sqrt{1+\frac{\beta^2}{\nu_{1}(s_0)}}s_0^2\le \frac{\pi^2}{2}\sqrt{1+\frac{\beta^2}{\nu_{1}(s_0)}}.
\end{equation*}
Now, using \eqref{bound:mu:s0} and the fact that $s_0< r(\Omega)$, we have that
\begin{equation*}
    \nu_{1}(s_0)\ge \frac{\pi^2}{4}\frac{1}{r(\Omega)^2+\frac{\pi^2}{4\beta}r(\Omega)},
\end{equation*}
so that
\begin{equation*}
    \nu_{1}(s_0)s_0^2-\nu_{1}(t_0)s_0^2\le \frac{\pi^2}{2}\sqrt{1+\frac{4\beta^2}{\pi^2}(r(\Omega)^2+\frac{\pi^2}{4\beta}r(\Omega))}\mathcal{R}(\Omega),
\end{equation*}
completing the proof.
\end{proof}

\section{Quantitative inequalities involving $\mathcal{A}(\Omega)$}
\label{section_alpha}
We conclude with the two theorem in which we show that is possible to improve inequalities \eqref{polya:lower:R} and \eqref{eigen:polya:thm} by adding the sharp power of the asymmetry $\mathcal{A}(\Omega)$. 
 \begin{prop} \label{prop:quantitative_wd}
Let $\Omega$ be a non-empty, bounded, open and convex set of $\R^n$. Then, 
\begin{equation}\label{quantitative_width}
 \frac{T_\beta(\Omega) P^2(\Omega)}{\abs{\Omega}^3}-\left(\frac{1}{3}+\frac{1}{\beta r(\Omega)}\right)  \ge C_4(n) \mathcal{A}(\Omega),
\end{equation}
where $C_4(n)$ is a positive constant depending only on the dimension of the space $n$.
\end{prop}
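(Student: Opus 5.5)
The plan is to redo the lower-bound argument of Theorem \ref{thm:quantitativepolyatorsion}, but to extract the gain from the interval $[0,\bar t]$, $\bar t=\abs{\Omega}/P(\Omega)$, instead of from $[\bar t,r(\Omega)]$. In that proof the whole deficit came from $\mu^3(\bar t)/(3\abs{\Omega}^3)$. Lemma \ref{lemma:M(PM)eP(MP)} turns this into $\mathcal R^3$, and via Proposition \ref{prop:comparisonasymmetries} this only gives $\mathcal A^3$. To get the linear power we will use the perimeter estimate \eqref{perimeter_estimate}, which improves the bound $P(t)\le P(\Omega)$ used on the first interval.

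The starting point is still \eqref{eq:lowboundpolya}:
\[
T_\beta(\Omega)\ge \int_0^{r(\Omega)}\frac{\mu^2(t)}{P(t)}\,dt+\frac{\abs{\Omega}^2}{\beta P(\Omega)},
\]
together with $\frac{P(\Omega)}{\beta\abs{\Omega}}>\frac{1}{\beta r(\Omega)}$. The piece on $[\bar t,r(\Omega)]$ is nonnegative and can be dropped. On $[0,\bar t]$ we use \eqref{perimeter_estimate} in the form
\[
\frac1{P(t)}\ge \frac1{P(\Omega)}\Big(1+c_n\frac{\abs{\Omega}-\mu(t)}{P(\Omega)^{\frac{n}{n-1}}}\Big),
\]
and $\mu(t)\ge \abs{\Omega}-P(\Omega)t$ from \eqref{steiner-measure}. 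The main term reproduces the $\tfrac13$, and what remains is a gain
\[
G:=\frac{c_n}{P(\Omega)^{1+\frac{n}{n-1}}}\int_0^{\bar t}(\abs{\Omega}-P(\Omega)t)^2(\abs{\Omega}-\mu(t))\,dt .
\]

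To bound $\abs{\Omega}-\mu(t)=\int_0^tP(s)\,ds$ from below, we use that $t\mapsto P(t)^{1/(n-1)}$ is concave and nonnegative on $[0,r(\Omega)]$. Hence $P(s)\ge (1-s/r(\Omega))^{n-1}P(\Omega)\ge 2^{1-n}P(\Omega)$ for $s\le r(\Omega)/2$. Since $\bar t<r(\Omega)$, this gives $\abs{\Omega}-\mu(t)\ge 2^{1-n}tP(\Omega)$ for $t\le \bar t/2$. On $[0,\bar t/2]$ we also have $\abs{\Omega}-P(\Omega)t\ge\abs{\Omega}/2$. Restricting $G$ to $[0,\bar t/2]$ therefore yields
\[
G\ge c'_n\,\frac{\abs{\Omega}^2\,\bar t^{\,2}}{P(\Omega)^{\frac{n}{n-1}}},
\qquad\text{so}\qquad
G\,\frac{P^2(\Omega)}{\abs{\Omega}^3}\ge c'_n\,\frac{\abs{\Omega}}{P(\Omega)^{\frac{n}{n-1}}}.
\]

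The remaining step, and the one that needs care, is to show that this scale-invariant ratio controls $\mathcal A(\Omega)$. Proposition \ref{prop:PRM-1} gives $\abs{\Omega}\ge r(\Omega)P(\Omega)/n$, and Proposition \ref{prop:r>w} gives $r(\Omega)\ge c_n w_\Omega$. Together these yield $\abs{\Omega}/P(\Omega)^{n/(n-1)}\ge c_n w_\Omega\,P(\Omega)^{-1/(n-1)}$. Finally \eqref{eq:perdiam} gives $P(\Omega)^{1/(n-1)}\le C_n\diam(\Omega)$, so the ratio is at least $c_n\mathcal A(\Omega)$. This proves \eqref{quantitative_width} with a purely dimensional $C_4(n)$.

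The only delicate points are the powers of $P$, which must combine into exactly $P^{-n/(n-1)}$, and the requirement $\bar t/2\le r(\Omega)/2$ for the concavity bound. The latter holds by Proposition \ref{prop:PRM-1}.
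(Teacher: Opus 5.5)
Your proof is correct. From the point where you reach $c_n\abs{\Omega}/P(\Omega)^{\frac{n}{n-1}}$ it coincides with the paper's argument, which also closes with Proposition \ref{prop:PRM-1}, Proposition \ref{prop:r>w} and \eqref{eq:perdiam}. The difference is where the gain is extracted from \eqref{eq:lowboundpolya}.

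The paper splits at the level $\overline t$ defined by $\mu(\overline t)=\tfrac34\abs{\Omega}$. On $[0,\overline t]$ it uses only $P(t)\le P(\Omega)$. It applies \eqref{perimeter_estimate} on the inner region $[\overline t,r(\Omega)]$, where monotonicity of $\mu$ makes the deficit uniform: $\abs{\Omega}-\mu(t)\ge\tfrac14\abs{\Omega}$. Both pieces are then integrated exactly via $\int_a^b\mu^2(-\mu')\,dt=(\mu^3(a)-\mu^3(b))/3$. This gives a gain proportional to $(1-\tilde c)\tilde c^{3}\abs{\Omega}^4/P(\Omega)^{2+\frac{n}{n-1}}$ with no further information on $\mu$ or $P$.

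You instead keep the split point $\abs{\Omega}/P(\Omega)$ of Theorem \ref{thm:quantitativepolyatorsion} and take the gain from the boundary layer $[0,\bar t/2]$. There $\abs{\Omega}-\mu(t)$ vanishes at $t=0$, so you need two extra ingredients. The first is \eqref{steiner-measure}. The second is the lower bound $P(s)\ge(1-s/r(\Omega))^{n-1}P(\Omega)$, which follows from the concavity and nonnegativity of $P^{\frac{1}{n-1}}$. Only nonnegativity of its limit at $r(\Omega)$ is used, so the bound holds even when $P(t)$ does not tend to $0$, as for a rectangle.

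The paper's route is therefore more economical. Yours shows that the linear gain already comes from a strip of width of order $\abs{\Omega}/P(\Omega)$ near $\partial\Omega$. It also has a concrete bonus: since your split is the one of Theorem \ref{thm:quantitativepolyatorsion}, you need not discard the inner part. Bounding it by $\mu^3(\bar t)/(3P^2(\Omega))$ and applying Lemma \ref{lemma:M(PM)eP(MP)} gives a remainder $C_2\mathcal R^3(\Omega)+C_4(n)\mathcal A(\Omega)$ at once.
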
 

 \begin{proof}[Proof of Proposition \ref{prop:quantitative_wd}]
 Let us recall the  lower bound in \eqref{eq:lowboundpolya}
 
 \begin{equation*}
   T_\beta(\Omega) \geq  \int_0^{r(\Omega)}\frac{\mu^2(t)}{P(t)}\,dt+\frac{\abs{\Omega}^2}{\beta P(\Omega)},
\end{equation*}
and let us divide the integral above at the value $\overline{t}$ defined for some $\Tilde{c}\in (0,1)$ as
\begin{equation}
    \label{labelcomega}
    \mu(\overline{t})= \Tilde{c}\abs{\Omega}.
\end{equation}

Moreover, \eqref{perimeter_estimate} and the monotonicity of $\mu$ gives, for all $t\in [\overline{t}, r(\Omega)]$
$$P(t)\le P(\Omega)- c_n \frac{\abs{\Omega}-\mu(\overline{t})}{P^\frac{1}{n-1}(\Omega)}$$
and together with \eqref{labelcomega}, we can write
\begin{align*}
    \int_0^{r(\Omega)}\frac{\mu^2(t)}{P(t)} \, dt&\geq \int_0^{\overline{t}}\frac{\mu^2(t)}{P(t)} \, dt+\int_{\overline{t}}^{r(\Omega)}\frac{\mu^2(t)}{P(t)} \, dt \\
    &\geq  \frac{1}{P^2(\Omega)} \int_0^{\overline{t}}\mu^2(t)(-\mu'(t))\, dt+ \frac{1}{\displaystyle{P(\Omega)\left(P(\Omega) - c_n \frac{\abs{\Omega} - \mu(\overline{t})}{P^{\frac{1}{n-1}}(\Omega)}\right)}}\int_{\overline{t}}^{r(\Omega)}\mu^2(t) (-\mu'(t))\, dt\\
    &= \frac{1}{P^2(\Omega)} \frac{\abs{\Omega}^3-\mu^3(\overline{t}) }{3}+ \frac{1}{\displaystyle{P(\Omega)\left(P(\Omega) - c_n \frac{\abs{\Omega} - \mu(\overline{t})}{P^{\frac{1}{n-1}}(\Omega)}\right)}} \frac{\mu^3(\overline{t}) }{3}\\
    & \geq \frac{1}{P^2(\Omega)} \frac{\abs{\Omega}^3-\mu^3(\overline{t}) }{3}+ \frac{1}{P^2(\Omega)}\left(1 +c_n\frac{\abs{\Omega} - \mu(\overline{t})}{P^{\frac{n}{n-1}}(\Omega)}\right) \frac{\mu^3(\overline{t}) }{3}\\
    &= \frac{\abs{\Omega}^3}{3P^2(\Omega)}+  c_n\frac{(1-\Tilde{c})\Tilde{c}^3}{P^{2+\frac{n}{n-1}}(\Omega)}\frac{\abs{\Omega}^4 }{3}.
\end{align*}
Now we choose $\Tilde{c}$ in order to maximize $(1-\Tilde{c})\Tilde{c}^3$. So we find the maximum in $(0,1)$ of the function $f(x)=(1-x)x^3$, which gives
\begin{equation*}
    \Tilde{c}= \frac{3}{4}.
\end{equation*}
Hence, we have
\begin{align}\label{eq:estimate1}
   \frac{ T_\beta(\Omega)P^2(\Omega)}{\abs{\Omega}^3}&\geq \left(\frac{1}{3}+\frac{1}{\beta r(\Omega)}\right)+ \frac{27 c_n}{256} \frac{\abs{\Omega}}{P(\Omega)}\frac{1 }{P^{\frac{1}{n-1}}(\Omega)} \geq \left(\frac{1}{3}+\frac{1}{\beta r(\Omega)}\right)+ \frac{27 c_n}{256n}\frac{r(\Omega)}{P^{\frac{1}{n-1}}(\Omega)}
\end{align}

Combining \eqref{eq:estimate1} with \eqref{eq:lowboundinradius} and \eqref{eq:perdiam}, we get the thesis.

\end{proof}

Moreover, we managed to prove a sharp result even for the P\'olya eigenvalue functional.
\begin{prop} 
\label{prop:quantitative_wd2}
Let $\Omega$ be a non-empty, bounded, open and convex set of $\R^n$. Then, 
\begin{equation}\label{quantitative_width2}
 \frac{\pi^2}{4}\frac{1}{1+\frac{2}{\beta r(\Omega)}} - \frac{\lambda( \Omega) \abs{\Omega}^2}{P^2(\Omega)}
 \ge C_5(n) \mathcal{A}(\Omega),
\end{equation}
where $C_5(n)$ is a positive constant depending only on the dimension of the space $n$.
\end{prop}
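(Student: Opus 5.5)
The plan is to compare $\lambda_\beta(\Omega)\abs{\Omega}^2/P^2(\Omega)$ with the explicit quantity $\frac{\pi^2}{4}\bigl(1+\frac{2}{\beta r(\Omega)}\bigr)^{-1}$ in two steps: first an interior gain coming from the strict decrease of $P(t)$, as in the proof of Proposition \ref{prop:quantitative_wd}; then a boundary gain coming from $s_0=\abs{\Omega}/P(\Omega)<r(\Omega)$. Throughout I read $\lambda(\Omega)$ in \eqref{quantitative_width2} as $\lambda_\beta(\Omega)$.

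\textbf{Step 1 (interior gain).} I start from the test-function bound \eqref{polyalambda} with $g$ a function of the distance. I perform the change of variables $s=\mu(t)/P(\Omega)$ as in the proof of Theorem \ref{thm:eigen_R}, but split at $\bar t$ defined by $\mu(\bar t)=\frac34\abs{\Omega}$, as in the proof of Proposition \ref{prop:quantitative_wd}, instead of at $\abs{\Omega}/P(\Omega)$. On $[\bar t,r(\Omega)]$, i.e.\ for $s\in[0,\frac34 s_0]$, \eqref{perimeter_estimate} gives
\[
P(t(s))\le P(\Omega)\Bigl(1-\tfrac{c_n}{4}\,\abs{\Omega}P(\Omega)^{-\frac{n}{n-1}}\Bigr)=:P(\Omega)(1-\delta).
\]
Taking $h(s)=\cos(\sqrt{\nu_1}\,s)$ with $\nu_1=\nu_1(\beta,s_0)$, this yields
\[
\nu_1 s_0^2-\frac{\lambda_\beta\abs{\Omega}^2}{P^2}\ \ge\ \delta\,\frac{\nu_1^2 s_0^2}{s_0}\int_0^{3s_0/4}\sin^2(\sqrt{\nu_1}s)\,ds\ \gtrsim\ \delta\,(\nu_1 s_0^2)^2 .
\]
Next, $\delta\gtrsim_n r(\Omega)/P^{1/(n-1)}\gtrsim_n \mathcal{A}(\Omega)$ by \eqref{convex_estimates}, \eqref{eq:lowboundinradius} and \eqref{eq:perdiam}. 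Finally, \eqref{bound:mu:s0} gives $\nu_1s_0^2\ge \frac{\pi^2}{4}\bigl(1+\frac{\pi^2}{4\beta s_0}\bigr)^{-1}$.

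\textbf{Step 2 (boundary gain).} Set $F(x)=\frac{\pi^2}{4}\bigl(1+\frac{2}{x}\bigr)^{-1}$, which is increasing. By \eqref{bound:mu:s0}, $\nu_1 s_0^2\le F(\beta s_0)$. Hence
\[
F(\beta r(\Omega))-\frac{\lambda_\beta\abs{\Omega}^2}{P^2}\ \ge\ \bigl[F(\beta r)-F(\beta s_0)\bigr]+\Bigl[\nu_1 s_0^2-\frac{\lambda_\beta\abs{\Omega}^2}{P^2}\Bigr].
\]
By the mean value theorem and $r-s_0=s_0\,\mathcal{R}(\Omega)$, the first bracket satisfies
\[
F(\beta r)-F(\beta s_0)\ \ge\ F'(\beta r)\,\beta s_0\,\mathcal{R}(\Omega)=\frac{\pi^2}{2}\,\frac{\beta s_0}{(\beta r+2)^2}\,\mathcal{R}(\Omega).
\]
Then I use $s_0\ge r/n$ and \eqref{eq:PRM-WD}, $\mathcal{R}\ge K(n)\mathcal{A}$. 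The second bracket is bounded below by Step 1.

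\textbf{Step 3 (combination).} For $\beta r(\Omega)\ge 1$, Step 1 alone gives a bound $c(n)\mathcal{A}(\Omega)$, because $\nu_1s_0^2$ is then bounded below by a dimensional constant (using $s_0\ge r/n$). For $\beta r(\Omega)\le 1$, both contributions are of order $\beta r(\Omega)\,\mathcal{A}(\Omega)$.

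The main obstacle is precisely this small-$\beta r$ regime. As $\beta r(\Omega)\to0$, the approximation $\lambda_\beta\approx \beta P/\abs{\Omega}$ suggests that the whole left-hand side of \eqref{quantitative_width2} is itself $O(\beta r(\Omega))$. A constant depending only on $n$ therefore seems to require some extra normalization or a lower bound on $\beta r(\Omega)$.

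What I would try first is to verify this heuristic on a fixed convex body with $\beta\to0$. If it is confirmed, the proof yields \eqref{quantitative_width2} with $C_5(n)$ uniform only on $\{\beta r(\Omega)\ge 1\}$, and with a factor $\min\{1,\beta r(\Omega)\}$ in general; I would flag this dependence explicitly.
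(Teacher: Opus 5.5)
Your reading of $\lambda(\Omega)$ as $\lambda_\beta(\Omega)$ matches the paper's proof, and your suspicion about the regime $\beta r(\Omega)\to0$ is correct. The check you propose needs no asymptotics of $\lambda_\beta$. Since $\lambda_\beta(\Omega)>0$,
\[
\frac{\pi^2}{4}\frac{1}{1+\frac{2}{\beta r(\Omega)}}-\frac{\lambda_\beta(\Omega)\abs{\Omega}^2}{P^2(\Omega)}<\frac{\pi^2}{4}\,\frac{\beta r(\Omega)}{\beta r(\Omega)+2}<\frac{\pi^2}{8}\,\beta r(\Omega).
\]
Take a fixed ball ($\mathcal{A}=1$) and let $\beta\to0$, or fix $\beta$ and shrink the ball. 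Then no positive constant depending only on $n$ can work, so \eqref{quantitative_width2} is false as stated.

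The paper's proof is essentially your Step 1. It splits at $\mu(\bar t)=\abs{\Omega}/2$ instead of $\frac34\abs{\Omega}$ and uses $P(t(s))^2\le P(\Omega)P(t(s))$. Its last display actually carries the factor $(\nu_1s_0^2)^2\ge\bigl(\frac{\pi^2}{4}(1+\frac{\pi^2P(\Omega)}{4\beta\abs{\Omega}})^{-1}\bigr)^2$; it is printed there with $\abs{\Omega}/P(\Omega)$ and $P(\Omega)/\abs{\Omega}$ swapped. The closing appeal to \eqref{convex_estimates} only turns this into $\bigl(\frac{\pi^2}{4}(1+\frac{\pi^2 n}{4\beta r(\Omega)})^{-1}\bigr)^2$. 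That factor behaves like $(\beta r(\Omega)/n)^2$ as $\beta r(\Omega)\to0$. So what the paper really proves is a $\beta r$-dependent bound of the same type as Corollary \ref{thm:quantitativepolyaeigenvalue}, uniform in $\beta$ only when $\beta r(\Omega)$ is bounded below.

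Your Step 2 has no counterpart in the paper. The paper simply discards the nonnegative difference $F(\beta r(\Omega))-F(\beta s_0)$ when passing from $\nu_1s_0^2$ to the explicit quantity. You quantify it using concavity of $F$, $r(\Omega)-s_0=s_0\mathcal{R}(\Omega)$, $s_0\ge r(\Omega)/n$ and \eqref{eq:PRM-WD}. This is correct and gives the linear order $\beta r(\Omega)\mathcal{A}(\Omega)$ for $\beta r(\Omega)\le1$, which the display above shows is optimal. Combined with Step 1 for $\beta r(\Omega)\ge1$, you get $c(n)\min\{1,\beta r(\Omega)\}\mathcal{A}(\Omega)$. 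That is the correct form of the proposition, and it improves the paper's quadratic dependence on $\beta r(\Omega)$.

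Two small corrections:
\begin{enumerate}
\item The Step 1 gain carries the factor $(\nu_1s_0^2)^2<(\frac{\pi^2}{8}\beta s_0)^2$. So for $\beta r(\Omega)\le1$ it is quadratic in $\beta r(\Omega)$, and the linear order comes from Step 2 alone, not from ``both contributions''.
\item The intermediate expression in Step 1 has an extra factor $\nu_1$. It should read $\delta\,\nu_1s_0\int_0^{3s_0/4}\sin^2(\sqrt{\nu_1}s)\,ds$, which still gives $\frac{9}{16\pi^2}\,\delta\,(\nu_1s_0^2)^2$.
\end{enumerate}
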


\begin{proof}[Proof of Proposition \ref{prop:quantitative_wd2}]
We start from \eqref{polyalambda} and we perform the change of variables   $s= \frac{\mu(t)}{P(\Omega)}$ as in  Theorem \ref{thm:quantitativepolyaeigenvalue}

\begin{equation}
    \label{lambda:change}\lambda_\beta(\Omega)\le\frac{\displaystyle{\int_{0}^{\frac{\abs{\Omega}}{P(\Omega)}}h'(s)^2 P(t(s))^2\,ds+\beta P^2(\Omega)}h^2\left(\frac{\abs{\Omega}}{P(\Omega)}\right)}{P^2(\Omega)\displaystyle{\int_0^{\frac{\abs{\Omega}}{P(\Omega)}}h(s)^2\,ds}}
\end{equation}

Let us start by observing that if $s\in \left[0,\frac{\abs{\Omega}}{2P(\Omega)}\right]$, then $t(s)\in [\overline{t}, r(\Omega)]$, where $\mu(\overline{t})=\abs{\Omega}/2$, and so by \eqref{perimeter_estimate} we get

\begin{equation*}
    P(t(s))\le P(\Omega)-c_n\frac{\abs{\Omega}}{2P^\frac{1}{n-1}(\Omega)}, \quad \quad \forall s \in \left[0,\frac{\abs{\Omega}}{2P(\Omega)} \right].
\end{equation*}

We can now split the integral at the numerator in \eqref{lambda:change} at the value $\frac{s_0}{2}=\frac{\abs{\Omega}}{2P(\Omega)}$, obtaining 
\begin{equation}
    \label{gradient:term}
    \begin{aligned}
        \int_{0}^{s_0}h'(s)^2 P(t(s))^2\,ds= \int_{0}^{\frac{s_0}{2}}h'(s)^2 P(t(s))^2\,ds+\int_{\frac{s_0}{2}}^{s_0}h'(s)^2 P(t(s))^2\,ds \le \\
        P(\Omega)\left(P(\Omega)-c_n\frac{\abs{\Omega}}{2P^\frac{1}{n-1}(\Omega)}\right)\int_{0}^{\frac{s_0}{2}}h'(s)^2 \,ds+ P^2(\Omega)\int_{\frac{s_0}{2}}^{s_0}h'(s)^2 \,ds=\\
        P^2(\Omega)\int_{0}^{s_0}h'(s)^2 \,ds- c_n P^2(\Omega)\frac{\abs{\Omega}}{2P^\frac{n}{n-1}(\Omega)}\int_{0}^{\frac{s_0}{2}}h'(s)^2 \,ds,
    \end{aligned}
\end{equation}
hence \eqref{lambda:change} and \eqref{gradient:term} give

\begin{equation*}
    \lambda_\beta(\Omega)\le \dfrac{\int_{0}^{s_0}\left|h'(s)\right|^{2}ds+\beta h\left(s_0\right)^{2}}{\int_{0}^{s_0}\left|h(s)\right|^{2}ds}- c_n \frac{\abs{\Omega}}{2P^\frac{n}{n-1}(\Omega)}\dfrac{\int_{0}^{\frac{s_0}{2}}h'(s)^2 \,ds}{\int_{0}^{s_0}h^2(s)\, ds}.
\end{equation*}
Choosing $h(s)= cos(\sqrt{\nu_{1}}s)$, we get 
\begin{equation}\label{eq:const}
\begin{aligned}
   & \dfrac{\int_{0}^{\frac{\abs{\Omega}}{P(\Omega)}}\left|h'(s)\right|^{2}ds+\beta h\left(\frac{\abs{\Omega}}{P(\Omega)}\right)^{2}}{\int_{0}^{\frac{\abs{\Omega}}{P(\Omega)}}\left|h(s)\right|^{2}ds}=\nu_{1}(\beta, s_0)\\
    &\int_0^{s_0}h(s)^2\,ds=\frac{\abs{\Omega}}{2P(\Omega)}+\frac{\sin\left(2\sqrt{\nu_{1}}\frac{\abs{\Omega}}{P(\Omega)}\right)}{4\sqrt{\nu_{1}}}\leq \dfrac{\abs{\Omega}}{P(\Omega)},\\
    & \int_0^{\frac{s_0}{2}}h'(s)^2=\nu_{1}(\beta,s_0) \int_0^{\frac{s_0}{2}} \sin^2(\sqrt{\mu}s)\, ds\ge \frac{4}{\pi^2}\mu^2_1(\beta, s_0)\frac{s_0^3}{24}
    \end{aligned}
\end{equation}
Then equation \eqref{eq:const} and $P(\Omega)r(\Omega) \leq \abs{\Omega} n$, gives 
\begin{equation}
    \label{quasidef}
    \nu_{1}(\beta, s_0)-\lambda_\beta(\Omega)\ge \frac{c_n }{12n\pi^2} \frac{r(\Omega)}{P^{\frac{1}{n-1}}(\Omega)}\nu_{1}^2(\beta,s_0)s_0^2
\end{equation}
Again, combining \eqref{quasidef} with \eqref{eq:lowboundinradius}, \eqref{eq:perdiam}, and \eqref{bound:mu:s0}, we get
$$ \nu_{1}(\beta, s_0)-\lambda_\beta(\Omega)\ge \frac{c_n }{12n\pi^2} \frac{r(\Omega)}{P^{\frac{1}{n-1}}(\Omega)}\nu_{1}^2(\beta,s_0)s_0^2$$
 and so multiplying by $s_0^2$ and recalling \eqref{bound:mu:s0}, we get
$$ \frac{\pi^2}{4}\frac{1}{1+\frac{2P(\Omega)}{\beta \abs{\Omega}}}-\frac{\lambda_\beta(\Omega)\abs{\Omega}^2}{P^2(\Omega)}\ge C_5(n) \left(\frac{\pi^2}{4}\frac{1}{1+\frac{\pi^2\abs{\Omega}}{4\beta P(\Omega)}}\right)^2\mathcal{A}(\Omega).$$
The thesis follows using the upper bound in \eqref{convex_estimates}.
\end{proof}

\section*{Acknowledgements}
We would like to thank Francesco Della Pietra for the valuable advices. This work has been partially supported by GNAMPA group of INdAM. R. Sannipoli was supported by the grant no. 26-21940S
of the Czech Science Foundation.

\section*{Conflicts of interest and data availability statement}
The authors declare that there is no conflict of interest. Data sharing not applicable to this article as no datasets were generated or analyzed during the current study.

\Addresses
\bibliographystyle{plain}
\bibliography{biblio}
\end{document}